\newcommand{\bE}{\ensuremath{\mathbb{E}}}
\newcommand{\bP}{\ensuremath{\mathbb{P}}}
\newcommand{\bR}{\ensuremath{\mathbb{R}}}
\newcommand{\bZ}{\ensuremath{\mathbb{Z}}}
\newcommand{\ind}{\ensuremath{\mathbbm{1}}}
\newcommand{\cC}{\ensuremath{\mathcal{C}}}
\newcommand{\norm}[1]{\left\Vert \, #1 \, \right\Vert}
\newcommand{\ddx}[1][1]{\ifnum#1=1 \frac{d}{dx} \else \frac{d^{#1}}{dx^{#1}} \fi}
\newcommand{\ddy}[1][1]{\ifnum#1=1 \frac{d}{dy} \else \frac{d^{#1}}{dy^{#1}} \fi}
\newcommand{\ddt}[1][1]{\ifnum#1=1 \frac{d}{dt} \else \frac{d^{#1}}{dt^{#1}} \fi}
\newcommand{\suml}{\sum\limits}
\theoremstyle{plain}
\newtheorem{thm}{Theorem}[section]  
\newtheorem{prop}[thm]{Proposition}
\newtheorem{lem}[thm]{Lemma}
\theoremstyle{definition}
\theoremstyle{remark}
\newtheorem{rem}{Remark}[section]
\numberwithin{equation}{section}
\DeclareMathOperator{\integers}{\mathbb{Z}}
\DeclareMathOperator{\reals}{\mathbb{R}}
\DeclareMathOperator{\nat}{\mathbb{N}}
\newcommand{\Latd}{\mathbb{Z}^d}
\newcommand{\hP}{\widehat{\bP}}
\begin{document}

\title{The contact process as seen from a random walk}
\author{Stein Andreas Bethuelsen\footnote{Email: bethuelsensa@math.leidenuniv.nl} \\ Leiden University and CWI Amsterdam }

\maketitle
\begin{abstract}
We consider a random walk on top of the contact process on $\mathbb{Z}^d$ with $d\geq 1$. In particular, we focus on the ``contact process as seen from the random walk''. Under the assumption  that the infection rate of the contact process is large  or the jump rate of the random walk is small, we show that  this process has at most two extremal measures. Moreover, the  convergence to these extremal measures is characterised by  whether the contact process survives or dies out, similar to the complete convergence theorem known for the ordinary contact process. Using this, we furthermore provide a law of large numbers for the random walk which holds under general assumptions on the jump probabilities of the random walk.
\end{abstract}

\bigskip

\emph{MSC2010:} Primary 60K37; Secondary 60K35, 60F15, 60J15\\
\emph{Key words and phrases:} random walks, contact process, complete convergence theorem, coupling, law of large numbers. \bigskip

\section{Introduction and main results}

\subsection{Motivation, background and outline}

In this paper we study a random walk on top of the contact process on $\Latd$ with $d\geq 1$.  That is, we assume that the transition kernel of the random walk depends on the  contact process in a local neighbourhood around the position of the random walk.
This is an example of a random walk in a dynamic random environment (abbreviated by RWDRE), a class of models that have recently been the subject of intensive studies in the mathematical literature (see e.g.\ \cite{AndresChiariniDeuschelSlowikRCMDRE2016}, \cite{AvenaBlondelFaggionatoRWDRE2016}, \cite{BirknerCernyDepperschmidtRWDRE2015}, \cite{DeuschelGuoRamirezRWBDRE2015}, \cite{HilarioHollanderSidoraviciusSantosTeixeiraRWRW2014}, \cite{HuveneersSimenhausRWSEP2014}, \cite{RedigVolleringRWDRE2013}). 

The contact process is a classical interacting particle system.  
This model was first introduced by \citet{HarrisCP1974} in the $1970$'s as a model for the spread of an infection in a population. In this model, ``infections'' spread to nearest neighbour sites at a constant rate $\lambda$ and a site become ``healthy'' at a constant rate $1$. 

On the one hand, the contact process is a model to which many of the mathematical tools developed for studying disordered systems apply, such as monotonicity, duality, renormalisation and coupling, and by now much is known. For instance, a full understanding of its dependence on the initial state is known by the complete convergence theorem. As a general reference about the contact process, we refer to \citet[Chapter 1]{LiggettSIS1999}.

On the other hand, the contact process is a complicated model. Indeed, since infections spread in space and time, it has  a non-trivial spatial and temporal correlation structure. Moreover, the contact process has a phase transition. For infection rate $\lambda$ sufficiently small, the whole population eventually becomes healthy, irrespectively of the initial configuration. Interestingly, for infection rate above a certain threshold,  infections may spread for all times with positive probability. In particular, in this regime (called the supercritical regime), the evolution of the contact process depends strongly on the initial configuration.

Furthermore, the contact process is an example of a model which, in the supercritical regime, does not fall into the class of well behaved models characterised by the cone-mixing condition. In particular, the general results obtained by  \citet*{AvenaHollanderRedigRWDRELLN2011} and  \citet{RedigVolleringRWDRE2013} do not apply to random walks on the supercritical contact process. Despite much progress in the last years, no general theory has so far been developed for RWDRE models when the dynamic random environment is not cone mixing.

For the above reasons, the study of a random walk on the supercritical contact process in the context considered in this paper was initiated by \citet{HollanderSantosRWCP2013}. They considered a class of nearest neighbour random walks on the one dimensional contact process. Combining monotonicity properties of the contact process and the random walk, they proved a law of large numbers, valid  throughout the supercritical regime, and, assuming large enough infection rate, a central limit theorem. Since then, the model has been studied in several papers. We mention  in particular \citet{BethuelsenHeydenreichRWAD2015}, who proved a law of large numbers for a version of the model on $\Latd$ with $d\geq1$, and  \citet{MountfordVaresDCP2013}, who improved the  central limit theorem of \citet{HollanderSantosRWCP2013} and proved that it holds throughout the supercritical regime.  See also \citet{BethuelsenVolleringRWDRE2016} and \citet*{BirknerCernyDepperschmidtRWDRE2015} for related results.

In contrast to \cite{BethuelsenHeydenreichRWAD2015}, \cite{HollanderSantosRWCP2013} and \cite{MountfordVaresDCP2013}, who studied the evolution of the random walk directly, the focus of this paper is on the ``contact process as seen from the random walk''-process (abbreviated by CPSRW). That is, we study the shift-perturbed version of the ordinary contact process on $\Latd$, $d\geq1$,  such that the random walk always remain at the origin. 
In particular, we are interested in the set of invariant measures for the CPSRW process and its convergence towards the extremal ones. 

Our main contribution is that, when the infection rate of the contact process is large or the jump rate of the random walk is small, then the CPSRW process satisfies a complete convergence theorem similar to what is known for the ordinary contact process. That is, the CPSRW process has (at most) two extremal invariant measures making the process ergodic and it converges towards a mixture of these states (in the Ces\`aro sense) depending only on whether the underlying contact process survives or dies out. For this, we allow for very general transitions kernels of the random walk. 

As a consequence of this result about the CPSRW process, we also derive limiting properties about the random walk itself. In particular, we show that it satisfies a law of large numbers under rather general assumptions on the transition kernel.

\subsubsection*{Outline}
In the next subsection we give a more precise definition of our model and in particular the CPSRW process. Our main results are presented in Subsection \ref{sec main}. As preparations for the proofs, we provide in Section \ref{sec contact} some preliminary results about the contact process and in Section \ref{sec coupling}  we provide a particular coupling construction of our model. Section \ref{sec results Stein} contains the proofs of our main results.

\subsection{The model}
\subsubsection*{The contact process}

Let $\Omega = \{0,1\}^{\Latd}$. 
For $\eta \in \Omega$ and $x \in \Latd$, we denote by $\eta_x$ the configuration which is identical to $\eta$ except at site $x$, where a $1$ is replaced by a $0$ and vice versa. We also denote by $s(\eta,x) := \sum_{y \sim x} \eta(y)$, where $\sum_{y \sim x}$ is the summation over nearest neighbours of $x$.

The contact process $(\eta_t)_{t\geq0}$ on $\Latd$ with ``infection'' rate $\lambda>0$ and ``recovery'' rate $1$ is the Markov process on $\Omega$ with generator $L \colon \mathcal{C}(\Omega; \reals) \mapsto \mathcal{C}(\Omega; \reals)$, where $ \mathcal{C}(\Omega; \reals)$ denotes the space of bounded continuous functions from $\Omega$ to $\reals$, and $L$ is given by
\begin{align}
Lf(\eta) = \sum_{x \in \Latd} \bigg[ \eta(x) \left[f(\eta_x) - f(\eta) \right] + \lambda s(\eta,x)\left(1-\eta(x)\right)\left[ f(\eta_x)-f(\eta)\right] \bigg].
\end{align}
We denote the semi-group generated by $L$  by $(S_t)_{t\geq0}$,  also considered on the space $\mathcal{C}(\Omega,\reals)$. 
 Note that the contact process is translation invariant, that is,
\begin{align}
 \bP_{\eta,\lambda} (\theta_x \eta_t \in \cdot) = \bP_{\theta_x \eta,\lambda} (\eta_t \in \cdot)
\end{align}
with $\theta_x$ denoting the shift operator $\theta_x \eta(y) = \eta(y-x)$ and $\bP_{\eta,\lambda}$ the path-space measure of the contact process on $D_{\Omega}[0,\infty)$, the set of c\`adl\`ag functions on $[0,\infty)$ taking values on $\Omega$, starting from $\eta_0=\eta$. Further, denote by $\mathcal{F}$ the product $\sigma$-algebra corresponding to $\Omega$ and let $\mathcal{M}_1(\Omega)$ denote the set of probability measures on $(\Omega,\mathcal{F})$. By $\delta_{\eta} \in \mathcal{M}_1(\Omega)$ we denote the measures which concentrates on $\eta \in \Omega$. For $\mu \in \mathcal{M}_1(\Omega)$ we denote by $\bP_{\mu,\lambda}$ the path-space measure of $(\eta_t)_{t \geq 0}$ when the contact process is initialised from $\mu$, that is, $\bP_{\mu,\lambda}(\cdot) := \int \bP_{\eta,\lambda}(\cdot) \mu(d\eta)$. 

The empty configuration where all sites equal to $0$, denoted by $\bar{0}$, is an absorbing state for the contact process since $s(\bar{0},x)=0$ for all $x \in \Latd$. On the other hand, when initialised with all sites equal to $1$, the contact process is known to evolve towards an equilibrium measure  called the \emph{upper invariant measure}. We denote this measure by $\bar{\nu}_{\lambda}$.  

As already mentioned, the contact process has a phase transition. That is, there is a critical threshold $\lambda_c \in (0,\infty)$, where $\lambda_c$ depends on the dimension, such that $\nu_{\lambda}=\delta_{\bar{0}}$ for $\lambda \in (0,\lambda_c]$ and, for all $\lambda \in (\lambda_c,\infty)$, it holds that  $\bar{\nu}_{\lambda}(\eta(x) =1)>0$ for any $x \in \Latd$.
Further, the two measures, $\delta_{\bar{0}}$ and $\bar{\nu}_{\lambda}$, are the only extremal measures for the contact process on $\Latd$. 
A complete description of the convergence  towards any mixture of them is known by the complete convergence theorem, which for later reference we state next.  (For a proof we refer to \cite[Theorem 1.2.27]{LiggettSIS1999}).

\begin{thm}[Complete Convergence for $(\eta_t)$]\label{thm complete convergence}
Let $\tau := \inf \{ t \geq 0 \colon \eta_t = 1_{\bar{0}} \}$. Then, for $\lambda>0$ and $\eta \in \Omega$;
\begin{align}
\delta_{\eta} S_t  \implies \bP_{\eta,\lambda}( \tau <\infty) \delta_{\bar{0}} + \bP_{\eta,\lambda}(\tau = \infty) \bar{\nu}_{\lambda} \quad \text{ as } t \rightarrow \infty,
\end{align}
where $\implies$ denotes weak convergence.
\end{thm}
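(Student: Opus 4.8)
The plan is to reduce the weak convergence to the convergence of a single family of elementary probabilities, and then to invoke the two deep ingredients on which this theorem rests: the self-duality of the contact process and the Bezuidenhout--Grimmett renormalisation, the latter saying that a supercritical contact process conditioned on survival equilibrates locally to $\bar{\nu}_\lambda$. Since $\Omega=\{0,1\}^{\Latd}$ is compact, the algebra generated by the cylinder functions $\eta\mapsto\ind\{\eta\cap A=\emptyset\}=\prod_{x\in A}(1-\eta(x))$, with $A\subset\Latd$ finite and configurations identified with their sets of occupied sites, is dense in $\mathcal{C}(\Omega;\reals)$ by Stone--Weierstrass. Hence $\delta_\eta S_t$ converges weakly to the stated mixture if and only if, for every finite $A$,
\begin{align}
\lim_{t\to\infty}\bP_{\eta,\lambda}(\eta_t\cap A\neq\emptyset)=\bP_{\eta,\lambda}(\tau=\infty)\,\bar{\nu}_\lambda(\{\zeta:\zeta\cap A\neq\emptyset\}).
\end{align}
As a preliminary I would record the dual characterisation of the upper invariant measure: letting $t\to\infty$ in the self-duality identity $\bP_{\bar{1},\lambda}(\eta_t\cap A\neq\emptyset)=\bP_{A,\lambda}(\eta_t\neq\emptyset)$ (with $\bar{1}$ the all-occupied configuration and $\bP_{A,\lambda}$ the law started from $\ind_A$) gives $\bar{\nu}_\lambda(\{\zeta:\zeta\cap A\neq\emptyset\})=\bP_{A,\lambda}(\eta_s\neq\emptyset\text{ for all }s)$.

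Next I would dispose of the extinction contribution. Splitting the left-hand probability according to $\{\tau<\infty\}$ and $\{\tau=\infty\}$, the first piece $\bP_{\eta,\lambda}(\eta_t\cap A\neq\emptyset,\ \tau<\infty)$ tends to $0$, because on $\{\tau<\infty\}$ the path is absorbed at $\bar{0}$, so $\eta_t\cap A=\emptyset$ for all large $t$, and dominated convergence applies. Thus matters reduce to the survival event, i.e.\ to showing that the conditional law converges locally to the upper invariant measure,
\begin{align}
\bP_{\eta,\lambda}\left(\eta_t\cap A\neq\emptyset \;\middle|\; \tau=\infty\right)\longrightarrow \bar{\nu}_\lambda(\{\zeta:\zeta\cap A\neq\emptyset\}).
\end{align}
One inequality is cheap: attractiveness, realised through the graphical representation, couples the process so that $\eta_t\leq\eta_t^{\bar{1}}$ pointwise, whence $\limsup_t\bP_{\eta,\lambda}(\eta_t\cap A\neq\emptyset)\leq\bar{\nu}_\lambda(\{\zeta:\zeta\cap A\neq\emptyset\})$ (equivalently via duality, $\bP_{A,\lambda}(\eta_t\cap\eta\neq\emptyset)\leq\bP_{A,\lambda}(\eta_t\neq\emptyset)$). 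This bound is only sharp when survival is certain, so it serves as a sanity check rather than a proof.

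The remaining conditional convergence is the crux and, I expect, the main obstacle: both its upper and lower directions require the supercritical machinery. The idea is to combine self-duality with the Bezuidenhout--Grimmett block construction, which shows that on the survival event an infection present at a space--time point generates, with high probability, a linearly growing macroscopic cluster that ``restarts'' the dynamics; consequently the configuration observed in the fixed finite window $A$ decouples from the distant past and relaxes to $\bar{\nu}_\lambda$. For finite $\eta$ this can be run directly through the dual process started from $A$; for general (possibly infinite) $\eta$ I would sandwich $\eta$ between finite truncations and $\bar{1}$ and pass to the limit using monotonicity together with the finite-$\eta$ statement. Multiplying the resulting conditional limit by $\bP_{\eta,\lambda}(\tau=\infty)$ yields the displayed identity and hence the theorem. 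Finally I would note that for $\lambda\leq\lambda_c$ the statement degenerates: there $\bP_{\eta,\lambda}(\tau=\infty)=0$ and $\bar{\nu}_\lambda=\delta_{\bar{0}}$, so the claim collapses to the almost-sure extinction already handled in the second step.
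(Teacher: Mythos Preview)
The paper does not prove this theorem at all: immediately before the statement it says ``For a proof we refer to \cite[Theorem 1.2.27]{LiggettSIS1999}'', and the result is used throughout as a black box. So there is nothing to compare against; your task was really to reproduce (or outline) Liggett's proof.

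Your outline follows the standard route---reduction to avoidance probabilities via Stone--Weierstrass, self-duality, splitting on $\{\tau<\infty\}$, and invoking the Bezuidenhout--Grimmett block construction for the hard conditional statement---and this is indeed how the proof in Liggett goes. However, two points deserve comment. First, the paragraph beginning ``The remaining conditional convergence is the crux'' is where all the work lies, and you have essentially restated the conclusion rather than sketched the mechanism: the actual argument requires the quantitative restart estimate $\bP_{A,\lambda}(\eta_t\neq\emptyset,\ \eta_t\cap B=\emptyset)\to 0$ for finite $A,B$ (Liggett's Theorem~I.2.27, relying on Proposition~I.2.22), and that estimate is the nontrivial consequence of the block construction, not an immediate corollary of ``linearly growing clusters''. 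Second, your final sentence contains an inaccuracy: for $\lambda\leq\lambda_c$ and \emph{infinite} $\eta$ one has $\bP_{\eta,\lambda}(\tau=\infty)=1$, not $0$ (the process never literally hits $\bar{0}$); the theorem still collapses correctly because $\bar{\nu}_\lambda=\delta_{\bar{0}}$ in that regime, but the reason is the triviality of $\bar{\nu}_\lambda$, not extinction. Note also that $\bar{\nu}_{\lambda_c}=\delta_{\bar{0}}$ is itself the Bezuidenhout--Grimmett theorem and is not ``already handled'' by the elementary extinction step.
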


\subsubsection*{The random walk}
The random walk $(X_t)$ is a process on $\integers^d$ whose transition probabilities  depend on the state of the contact process in a neighbourhood around the random walk. 
More precisely, we assume (w.l.o.g.) that $X_0=o$, where $o\in \Latd$ denotes the origin. Further, at any time $t>0$, the rate to jump from site $x$ to site $x + z$, given that the contact process is in state $\eta$ at time $t$, is given by $\gamma\alpha(\theta_x\eta, z) \in [0,\infty)$. Here, $\gamma\in [0,\infty)$ is a parameter of the model.

In order for the above process to be well defined, we need to pose some regularity assumptions. For this purpose, we assume throughout this paper that
\begin{align}\label{assump 1}
\norm{\alpha}_1 := \sum_{z \in \Latd} \norm{z} \sup_{\eta \in \Omega} |\alpha(\eta,z)| < \infty,
\end{align}
and that for some $R\in \nat$ and every $z\in \Latd$;
\begin{align}\label{assump 2}
\alpha(\eta,z)-\alpha(\omega,z) = 0 \text{ whenever } \eta \equiv \omega \text{ on } [-R,R]^d.
\end{align}
Assumption \eqref{assump 1} assures that the position of $(X_t)$ has a first moment, whereas Assumption \eqref{assump 2} says that  the random walk only depends on the contact process within a finite region around its location. Note that, by \eqref{assump 1},  the jump rate of $(X_t)$ is bounded by $\gamma \norm{\alpha}_1$.

Further, we say that the random walk is  \emph{elliptic} if there is a finite subset $E=\{e_1,\dots,e_n\}$ of $\Latd$ such that 
\begin{align}\label{eq weakly elliptic}
\alpha(\eta,e_i) >0 \quad \forall \: \eta \in \Omega \text{ and } i \in \{1,\dots,n\},
\end{align}
and such that $\alpha(\eta,y)>0$ for some $\eta \in \Omega$ and $y \in \Latd$ if and only if $ y=\sum_{i=1}^d a_i e_i$ with $a_i \in \{0,1,2,\dots\}$, $i=1,\dots,n$.

Lastly, for $\eta=(\eta_t) \in D_{\Omega}[0,\infty)$, let $P^{\eta}$ denote the \emph{quenched} law of $(X_t)$ in environment $\eta$. For $\mu \in \mathcal{M}_1(\Omega)$, the \emph{annealed}  law of $(X_t)$ is given by
\begin{align}
P^{\mu}(\cdot) := \int_{D_{\Omega}[0,\infty)} P^{\eta}(\cdot) \bP_{\mu,\lambda}(d\eta).
\end{align}

\subsubsection*{The contact process as seen from a random walk}

``The contact process  seen from the random walk'' (that is, the CPSRW process) is the key object of this paper. This process, which is also useful for understanding the asymptotic behaviour of the random walk itself, is the Markov process on $\Omega$ with generator
\begin{align}\label{eq generator CP}
L^{EP} f(\eta) := Lf(\eta) + \sum_{z\in \Latd} \alpha(\eta,z) \left[f(\theta_{-z}\eta)-f(\eta)\right],
\end{align}
corresponding semigroup $(S_t^{EP})$, both acting on $\mathcal{C}(\Omega;\reals)$, and with path-space measure denoted by $\bP_{\eta,\lambda}^{EP}$. Here, the superscript EP is an abbreviation for \emph{environment process} and is used to distinguish it from $\bP_{\eta,\lambda}$, the path-space measure of the contact process. 

\subsection{Main theorems}\label{sec main}

As for the ordinary contact process, it is clear that $\bar{0}$ is an absorbing state for the CPSRW process as well. If $\lambda < \lambda_c$ it is not difficult to show that $\delta_{\bar{0}}$ is the only stationary distribution for $(\eta_t^{EP})$. This follows for instance from the methods developed in \citet{RedigVolleringRWDRE2013} together with well known convergence estimates towards $\bar{0}$ for the subcritical contact process, see Theorem 1.2.48 in \cite{LiggettSIS1999}.

On the other hand, when $\lambda>\lambda_c$, one can often show that there exists more than one stationary distribution for the CPSRW process. For this, it is sufficient to show that there is a site $x \in \Latd$ such that 
\begin{align}\label{eq positive density} \limsup_{t \rightarrow \infty} \frac{1}{t} \int_0^t \eta_t^{EP}(x) dt >0.
\end{align}
That \eqref{eq positive density} holds  when $\lambda>\lambda_c$ can been shown by several methods. For instance, \cite[Theorem 1.4]{BethuelsenHeydenreichRWAD2015} and  \cite[Theorem 1]{HollanderSantosRWCP2013}, both proven via monotonicity arguments and particular properties of the contact process, imply that \eqref{eq positive density} holds for the class of models studied in these papers.  In \citet{SantosRWSEP2013} another method is put forward, by use of  multiscale analysis, and applied to a random walk on the exclusion process. This method can presumably be applied to random walks on the contact process as well.

Ideally we would like to describe the entire class of stationary distributions corresponding to $(\eta_t^{EP})$, given the transition kernel of $(X_t)$ and the infection parameter $\lambda$. As we saw in Theorem \ref{thm complete convergence}, a complete description is at hand for the ordinary contact process, i.e.\ when not perturbed by the random walk. Our main theorem shows that a similar statement holds for $(\eta_t^{EP}$)  when either $\lambda$ is sufficiently large or $\gamma$ is sufficiently small.

\begin{thm}[Complete convergence for $(\eta_t^{EP})$]\label{thm CE}
Assume that $(X_t)$ satisfies Assumptions \eqref{assump 1} and \eqref{assump 2} and that it is elliptic. 
\begin{description}
\item[a)]   Let $\lambda \in (\lambda_c,\infty)$. Then there is a $\gamma_0 \in (0,\infty)$ such that for all $\gamma <\gamma_0$ 
 there exists $\bar{\nu}_{\lambda}^{EP} \in \mathcal{M}_1(\Omega)$ making $\bP_{\bar{\nu}_{\lambda}^{EP},\lambda}^{EP}$ stationary and ergodic with respect to time-shifts. Furthermore, for any $\eta \in \Omega$,
\begin{align}
t^{-1} \int_0^t \delta_{\eta}S_s^{EP} ds \implies  \bP_{\eta,\lambda}(\tau =\infty)  \bar{\nu}_{\lambda}^{EP}
+ \bP_{\eta,\lambda}(\tau < \infty)\delta_{\bar{0}}.
\end{align}
\item[b)] Let $\gamma \in (0,\infty)$. Then there is a $\lambda_0 \in (0,\infty)$ such that for all $\lambda>\lambda_0$
there exists $\bar{\nu}_{\lambda}^{EP} \in \mathcal{M}_1(\Omega)$ making $\bP_{\bar{\nu}_{\lambda}^{EP},\lambda}^{EP}$ stationary and ergodic with respect to time-shifts. Furthermore, for any $\eta \in \Omega$,
\begin{align}
t^{-1} \int_0^t \delta_{\eta}S_s^{EP} \implies  \bP_{\eta,\lambda}(\tau =\infty)  \bar{\nu}_{\lambda}^{EP}
+ \bP_{\eta,\lambda}(\tau < \infty)\delta_{\bar{0}}.
\end{align}
\end{description}
\end{thm}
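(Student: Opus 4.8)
The plan is to reduce the complete convergence statement to a single coupling estimate and then feed that estimate into a standard ergodic-theoretic argument. First observe that the ``infection content'' of the CPSRW is merely a re-centred copy of the contact process, so if we realise $(\eta_t^{EP})$ as $\theta_{-X_t}\eta_t$ (with $(\eta_t)$ the contact process and $(X_t)$ the walk), then $\eta_t^{EP}=\bar 0$ exactly when $\eta_t=\bar 0$; hence the extinction time $\tau$ is the same random variable for both processes, and on the event $\{\tau<\infty\}$ we trivially get $\delta_\eta S_t^{EP}\implies\delta_{\bar 0}$, supplying the term $\bP_{\eta,\lambda}(\tau<\infty)\delta_{\bar 0}$. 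All the work therefore concerns the survival event, where the goal is to show that the time-averaged law converges to a single measure $\bar\nu_\lambda^{EP}$, independent of the starting configuration $\eta$.

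For \emph{existence}, I would use compactness of $\Omega$ (hence of $\mathcal M_1(\Omega)$): the Ces\`aro averages $t^{-1}\int_0^t \bar\nu_\lambda S_s^{EP}\,ds$ are tight, and every subsequential weak limit is $S^{EP}$-invariant. Since $\lambda>\lambda_c$, the positive-density estimate \eqref{eq positive density} guarantees that any such limit charges configurations other than $\bar 0$, so we obtain a stationary $\bar\nu_\lambda^{EP}\neq\delta_{\bar 0}$. \emph{Ergodicity} (equivalently, extremality among stationary measures) will not be argued separately: it follows once the coupling below shows that any two copies of the process, both conditioned to survive, agree eventually, which forces uniqueness of the survival-supported stationary measure and hence its extremality under the time-shift. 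Combined with Birkhoff's theorem applied to the stationary process, this reduces everything to the following coupling claim: there is a joint realisation of the CPSRW started from $\eta$ (on $\{\tau=\infty\}$) and from $\bar\nu_\lambda^{EP}$ such that, almost surely on survival, the two re-centred environments coincide on every finite box for all large $t$.

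The coupling itself is the heart of the matter, and I would build it on the construction of Section~\ref{sec coupling}: drive both contact processes by the \emph{same} graphical representation (so that, with $\eta\le$ all-ones, the basic coupling is monotone and, on survival, the two contact processes agree on any fixed box for all large times, by the estimates assembled in Section~\ref{sec contact} and Theorem~\ref{thm complete convergence}), and drive both walks by the \emph{same} Poisson jump clocks. The mechanism is then a ``once synchronised, stay synchronised'' argument: whenever the two contact configurations agree on the box $[-R,R]^d$ \eqref{assump 2} around a common walker position, the next jump is identical for both walks, so the walkers remain together and the two re-centred environments stay coupled. Ellipticity \eqref{eq weakly elliptic} is used to guarantee that the walkers can in fact be brought to a common position (and to keep the jump increments comparable) before synchronisation is declared.

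The main obstacle is exactly the synchronisation of the two walkers: a priori the walk driven by $\eta$ and the walk driven by the stationary copy may take different jumps wherever the two contact processes still disagree, and because the walker moves through space it can keep meeting fresh discrepancies created at the ``boundary'' of the monotone coupling, so the two trajectories $X_t,Y_t$ need not meet on their own. This is precisely where the perturbative hypotheses enter, and I expect both regimes to be handled by the same quantitative input: either $\gamma<\gamma_0$ (the walker moves slowly, at rate bounded by $\gamma\norm{\alpha}_1$ via \eqref{assump 1}, so the local discrepancies around it relax before it escapes them) or $\lambda>\lambda_0$ (the coupled contact processes relax so fast, with discrepancies so sparse, that the walker rarely encounters one). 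In either case one proves that the total discrepancy seen along the walker's trajectory is summable in an appropriate space-time sense, so a Borel--Cantelli argument yields a last time at which the walks disagree, after which they coincide forever on survival. Making this summability precise, uniformly in the (unbounded) walker displacement, is the step I expect to require the most care; the thresholds $\gamma_0$ and $\lambda_0$ will emerge from balancing the walker's drift against the contact process's relaxation rate in this estimate.
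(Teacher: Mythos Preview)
Your overall skeleton---split on $\{\tau<\infty\}$, produce a stationary measure by compactness, and prove uniqueness on survival via a coupling that eventually locks the two environment processes together---matches the paper. But the mechanism you propose for the coupling is not the one the paper uses, and the one you sketch (``summability of discrepancies along the walker's trajectory'' via a Borel--Cantelli argument) would be hard to carry out as stated.

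The paper's coupling does \emph{not} track local discrepancies and their relaxation. Instead, the key input is the \emph{shape theorem} (Theorem~\ref{thm shape} and Lemma~\ref{lem CE}): conditionally on survival, two contact processes driven by the same graphical representation agree \emph{exactly} inside the growing space-time cone $t(1-\epsilon)D$ for all large $t$, where $D=D(\lambda)$ is the asymptotic shape. On the other hand, by the rate bound in \eqref{assump 1} the walker stays inside the cone $t(1+\epsilon)\mathcal{R}(\gamma)$ for all large $t$ (Lemma~\ref{lem coupling help}). The perturbative hypothesis is then purely \emph{geometric}: one needs $(1+\epsilon)\mathcal{R}(\gamma)\subset D$, which holds for $\gamma$ small (since $\mathcal{R}(\gamma)$ shrinks) or $\lambda$ large (since $D(\lambda)$ grows to all of $\reals^d$). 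Once the walker is confined to the region where the two contact processes coincide, there are no discrepancies at all along its trajectory, and no summability estimate is needed.

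There is a second point where your construction differs. You drive both walks by the same clocks from the start; the paper instead uses \emph{independent} clocks on $[0,T]$ and shared clocks on $(T,\infty)$ (Lemma~\ref{lem coupling}). Ellipticity is then used, not to make the walkers drift together, but to guarantee that for any target $x\in (1+\epsilon)T\mathcal{R}(\gamma)$ there is a positive-probability event $B_x$ (depending only on the first walker's clocks and uniforms on $[0,T]$, hence independent of everything else) on which $X_T^1=x$. Conditioning on $B_x$ forces $X_T^1=X_T^2$ at time $T$ without disturbing the law of the second system, and from then on the shared clocks and the cone containment keep the walkers locked together.

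Finally, a small remark on existence: the paper does \emph{not} invoke \eqref{eq positive density} to produce a nontrivial limit. It simply takes an extremal invariant measure $\mu^{EP}$; if the only one is $\delta_{\bar 0}$ the theorem holds trivially with $\bar\nu_\lambda^{EP}=\delta_{\bar 0}$, and otherwise Proposition~\ref{prop coupling} forces any two such $\mu^{EP}\neq\delta_{\bar 0}$ to coincide.
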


The choice of $\gamma_0$ and $\lambda_0$  in Theorem \ref{thm CE} is related to the asymptotic speed at which an infection spreads. That is,  we require the random walk trajectory to eventually be contained inside a forward space-time cone in which, for any starting configuration $\eta \in \Omega \setminus \{\bar{0}\}$, the contact process conditioned on survival is approximately in equilibrium. This is similar in spirit to the assumption on $\lambda$ in \cite[Theorem 2]{HollanderSantosRWCP2013}.

Similar perturbative regimes have recently been studied for several other RWDRE models with non-uniform dependence on the initial configuration, in particular by \citet*{AvenaBlondelFaggionatoRWDRE2016},
\citet*{HilarioHollanderSidoraviciusSantosTeixeiraRWRW2014} and \citet*{HuveneersSimenhausRWSEP2014}. These very interesting works do not overlap with that of this  paper and are furthermore based on very different methods. 

The proof of Theorem \ref{thm CE} follows by a coupling argument and uses known mixing properties of the supercritical contact process together with basic ergodic theory, and does not (directly) rely on the monotonicity properties of the contact process. Further, for what appears to be only due to technical matters, we restrict to convergence in the sense of Ces\`aro.

We further note that the strategy of the proof can be applied to other models with similar mixing properties as the contact process. For instance, Theorem \ref{thm CE} can be shown to hold for certain extensions of our model where the random walk is allowed to interact with the medium, i.e.\ the contact process, by locally adding/removing infections. Such extensions may be natural from an application point of view.

The ellipticity assumption in Theorem \ref{thm CE} seems necessary for the theorem to hold in general. Indeed, an example of an non-elliptic random walk for which there exists three extremal invariant measures for $(\eta_t^{EP})$ can be constructed by making the random walk resemble the behaviour of the rightmost particle process of the contact process on $\integers$.
One way to achieve this is by considering a random walk that jumps deterministically to the right at a rate $\gamma \geq \lambda$ when on an infected site and otherwise as a simple random walk with jump rate $1$. Considering the corresponding CPSRW process started from $\bar{0}$, $\bar{1}$ or the configuration where all sites on the negative integers are infected and the remaining sites are healthy, it is not difficult to show (using results about the distribution of the contact process seen from the rightmost particle, e.g.\  \citet{GalvesPresuttiEdgeCP1987}) that this process has  three invariant measures, all singular with respect to the other two.

Presumably, a similar reasoning can be made rigorous when $\lambda$ is close to the critical value or $\gamma$ is close to $0$, even in cases where $(X_t)$ is elliptic.
On the other hand, for the case considered in Theorem \ref{thm CE}, we do not think the ellipticity assumption is really necessary. We prove this rigorously in the case the contact process is started from $\bar{\nu}_{\lambda}$, as stated next. 

\begin{thm}[Convergence of the upper invariant measure]\label{thm CE3}
Assume that $(X_t)$ satisfies Assumptions \eqref{assump 1} and \eqref{assump 2}. Furthermore, let $\lambda$ and $\gamma$ be as in Theorem \ref{thm CE}.
  Then there exists $\bar{\nu}_{\lambda}^{EP} \in \mathcal{M}_1(\Omega)$ making $\bP_{\bar{\nu}_{\lambda}^{EP}}^{EP}$ stationary and ergodic with respect to time-shifts and such that
\begin{align} 
t^{-1} \int_0^t \bar{\nu}_{\lambda}S_s^{EP} ds \implies  \bar{\nu}_{\lambda}^{EP} \in \mathcal{M}_1(\Omega).
\end{align}
\end{thm}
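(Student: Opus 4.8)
The plan is to exploit the fact that, started from the upper invariant measure, the contact process is \emph{stationary in time}, which lets me recast the Ces\`aro convergence as a statement about the environment seen from a walker that has been running for a long time inside a fixed equilibrium environment. Concretely, I would first build a two-sided stationary version $(\eta_t)_{t\in\bR}$ of the contact process with one-dimensional marginal $\bar{\nu}_{\lambda}$, together with a graphical representation for the jump clocks of $(X_t)$. By time-stationarity of this field, the law $\bar{\nu}_{\lambda}S_s^{EP}$ of the CPSRW at time $s$ coincides with the law of $\theta_{-X_0}\eta_0$, where the walker is started at the origin at time $-s$ and observed at time $0$. Thus it suffices to show that this latter law converges (or at least converges in the Ces\`aro sense) as $s\to\infty$, and to identify the limit as the desired $\bar{\nu}_{\lambda}^{EP}$.

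Since $\bar{\nu}_{\lambda}$ is a non-trivial translation invariant measure, the contact process started from it survives almost surely ($\tau=\infty$ a.s.), so no $\delta_{\bar{0}}$-component can appear; this is why the right-hand side reduces to $\bar{\nu}_{\lambda}^{EP}$. To prove convergence of the law of $\theta_{-X_0}\eta_0$ I would use the coupling of Section \ref{sec coupling} to compare the walker run from time $-s$ with the walker run from time $-s'$, $s'>s$, in the same environment and with the same clocks. By Assumption \eqref{assump 1} the walker has bounded jump rate, so up to time $0$ its trajectory is, with high probability, confined to a forward space-time cone; the parameter restriction ($\lambda$ large or $\gamma$ small, as in Theorem \ref{thm CE}) guarantees that this cone lies inside the region where the supercritical contact process has already relaxed to, and decorrelated from, the distant past. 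Using the quantitative space-time mixing of the upper invariant measure collected in Section \ref{sec contact}, the local configuration $\theta_{-X_0}\eta_0$ read off around the walker at time $0$ becomes asymptotically insensitive to the starting time, which produces a limiting measure $\bar{\nu}_{\lambda}^{EP}$ and the claimed convergence $t^{-1}\int_0^t\bar{\nu}_{\lambda}S_s^{EP}\,ds\implies\bar{\nu}_{\lambda}^{EP}$.

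It remains to show that $\bar{\nu}_{\lambda}^{EP}$ makes $\bP_{\bar{\nu}_{\lambda}^{EP}}^{EP}$ stationary and ergodic under time-shifts. Stationarity is automatic: $\Omega$ is compact, so the Ces\`aro averages $t^{-1}\int_0^t\bar{\nu}_{\lambda}S_s^{EP}\,ds$ have weak limit points, and every such limit point is invariant for the Feller semigroup $(S_s^{EP})$. For ergodicity I would return to the stationary space-time picture: $\bar{\nu}_{\lambda}^{EP}$ is a factor of the time-shift flow on the space of two-sided equilibrium trajectories and jump clocks, and this flow is ergodic precisely because of the mixing of the supercritical contact process (the clock field being independent and mixing on its own). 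Any time-shift-invariant event for $\bP_{\bar{\nu}_{\lambda}^{EP}}^{EP}$ therefore pulls back to a shift-invariant event of an ergodic flow and so has probability $0$ or $1$. Ellipticity plays no role here: the argument never requires the walker to explore space in any particular way, only that the environment it traverses is in equilibrium and mixing. This is exactly what is lost for a general starting configuration $\eta\in\Omega$, and explains why Theorem \ref{thm CE} must assume ellipticity while Theorem \ref{thm CE3} does not.

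The main obstacle is the middle step: the position $X_0$ and the environment $\eta_0$ are strongly dependent, since the walker's motion is driven by the very configuration it sits in, so $\theta_{-X_0}\eta_0$ is \emph{not} distributed as $\bar{\nu}_{\lambda}$ (there is a Palm-type size-biasing around the walker). Controlling this correlation, i.e.\ quantifying how the local environment seen from the walker forgets the distant past while the walker stays confined to its cone, is where the space-time mixing estimates and the parameter restriction do the real work, and is the delicate heart of the proof.
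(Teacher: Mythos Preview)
Your route is genuinely different from the paper's, and as written it has a real gap in each of its two main steps.

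For the convergence step: you propose to couple a walker started at time $-s'$ with one started at time $-s$ in the same two-sided equilibrium environment, invoking ``the coupling of Section~\ref{sec coupling}''. But that coupling is built for two walkers in two \emph{different} environments started at the \emph{same} time; it does not address your situation. More seriously, without ellipticity there is no mechanism forcing the two walkers to meet: at time $-s$ walker~$1$ sits at some random site while walker~$2$ sits at $o$, and sharing clocks thereafter need not bring them together. The paper avoids this difficulty by not constructing $\bar{\nu}_{\lambda}^{EP}$ from scratch at all. By compactness and ergodic decomposition there exists \emph{some} extremal invariant measure $\mu^{EP}$ for $(\eta_t^{EP})$; if $\mu^{EP}=\delta_{\bar{0}}$ one is done, and otherwise extremality gives ergodicity for free. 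The only task left is to show that the Ces\`aro averages started from $\bar{\nu}_{\lambda}$ converge to that same $\mu^{EP}$, which is Proposition~\ref{prop coupling2}. Its key idea, replacing the ellipticity-based event $B_x$ of Proposition~\ref{prop coupling}, is to condition the $\bar{\nu}_{\lambda}$-walker on $\{N_T^2=0\}$ (no jumps on $[0,T]$, hence $X_T^2=o$) and the $\mu^{EP}$-walker on $\{N_T^1=0\}$ (hence $X_T^1=o$); stationarity of the contact process under $\bar{\nu}_{\lambda}$ guarantees that conditioning on $\{N_T^2=0\}$ does not bias the environment. This ``wait at the origin'' trick is precisely the substitute for ellipticity that your outline is missing.

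For the ergodicity step: your factor argument is circular as stated. To exhibit the stationary process $\bP_{\bar{\nu}_{\lambda}^{EP}}^{EP}$ as a factor of the two-sided environment-plus-clocks flow you would need a walker running from $-\infty$, i.e.\ a measurable map from the two-sided data to a bi-infinite trajectory. The existence of such a map is essentially the convergence statement you are trying to prove, so it cannot be assumed in order to deduce ergodicity. In the paper this issue never arises because ergodicity is inherited from extremality of $\mu^{EP}$, not from any explicit construction.
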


As mentioned above, under fairly general assumptions on the random walk and assuming that either $\lambda$ is large or $\gamma$ is small, we believe that the CPSRW-process has exactly two extremal measure. That is,  in Theorems \ref{thm CE} and \ref{thm CE3}, we have $\bar{\nu}^{EP}_{\lambda} \neq \delta_{\bar{0}}$. Although we do not provide a proof of this, nevertheless, from the ergodic property of $\bar{\nu}_{\lambda}^{EP}$ only, we infer information about the random walk. 

\begin{thm}[Law of large numbers]\label{cor 2}
Under the  assumptions of Theorem \ref{thm CE} there exists $v_0,v_1 \in \reals^d$ such that for all $\eta \in \Omega$,
 \begin{align}\label{eq LLN} \lim_{t \rightarrow \infty} t^{-1} X_t = \bP_{\eta,\lambda}(\tau <\infty)v_0 + \bP_{\eta,\lambda}(\tau =\infty)v_1, \quad P^{\eta}-a.s.\end{align}
Relaxing the ellipticity assumption on $(X_t)$, \eqref{eq LLN} holds $P^{\bar{\nu}_{\lambda}}$-a.s.
\end{thm}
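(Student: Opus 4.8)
The plan is to realise the displacement $X_t$ as a martingale plus an additive functional of the environment process $(\eta_t^{EP})$, and then to treat the two pieces separately: the martingale by a strong law, the additive functional by the time-ergodicity of $\bar{\nu}_{\lambda}^{EP}$ supplied by Theorem \ref{thm CE} (resp.\ Theorem \ref{thm CE3}). Write $b(\eta):=\gamma\sum_{z\in\Latd} z\,\alpha(\eta,z)$ for the mean instantaneous velocity seen from the walk; by \eqref{assump 1} it is bounded, $\sup_{\eta}\norm{b(\eta)}\le\gamma\norm{\alpha}_1$, and by \eqref{assump 2} it is local, depending on $\eta$ only through $\eta|_{[-R,R]^d}$. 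Since in the original frame $(X_t)$ jumps by $z$ at rate $\gamma\alpha(\theta_{X_t}\eta_t,z)=\gamma\alpha(\eta_t^{EP},z)$, the compensator of $(X_t)$ is $\int_0^t b(\eta_s^{EP})\,ds$, so $M_t:=X_t-\int_0^t b(\eta_s^{EP})\,ds$ is an $\reals^d$-valued martingale under $P^{\eta}$. Setting $v_0:=b(\bar{0})$ and $v_1:=\int b\,d\bar{\nu}_{\lambda}^{EP}$, it suffices to prove that $P^{\eta}$-a.s.\ (i) $t^{-1}M_t\to0$, and (ii) $t^{-1}\int_0^t b(\eta_s^{EP})\,ds$ converges to $v_0$ on $\{\tau<\infty\}$ and to $v_1$ on $\{\tau=\infty\}$; adding these gives \eqref{eq LLN}.

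For (i) the only wrinkle is that \eqref{assump 1} grants merely a first moment for the jumps, so I would truncate. Fix $K$ and split $b=b^{\le K}+b^{>K}$ and $X=X^{\le K}+X^{>K}$ according to whether $\norm{z}\le K$. The bounded-jump martingale $M_t^{\le K}:=X_t^{\le K}-\int_0^t b^{\le K}(\eta_s^{EP})\,ds$ has predictable quadratic variation growing at most linearly (bounded rate, jumps bounded by $K$), so $t^{-1}M_t^{\le K}\to0$ a.s.\ by the $L^2$ strong law for martingales. The large-jump remainder is dominated, uniformly in the environment, by a compound Poisson process with finite rate $\gamma\sum_{\norm{z}>K}\sup_{\eta}\alpha(\eta,z)$ and mean increment rate $\epsilon_K:=\gamma\sum_{\norm{z}>K}\norm{z}\sup_{\eta}\alpha(\eta,z)$; together with the deterministic bound $\abs{b^{>K}}\le\epsilon_K$ this gives $\limsup_t t^{-1}\abs{M_t^{>K}}\le2\epsilon_K$ a.s. Letting $K\to\infty$, $\epsilon_K\to0$ by \eqref{assump 1}, which yields (i).

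For (ii) the extinction case is immediate: on $\{\tau<\infty\}$ we have $\eta_s^{EP}=\theta_{X_s}\bar{0}=\bar{0}$ for all $s>\tau$, so the time average converges to $b(\bar{0})=v_0$. The survival case is the crux. By Theorem \ref{thm CE} (or Theorem \ref{thm CE3}) the measure $\bP_{\bar{\nu}_{\lambda}^{EP}}^{EP}$ is stationary and ergodic under time shifts, so the continuous-time Birkhoff theorem applied to the bounded local function $b$ gives $t^{-1}\int_0^t b(\eta_s^{EP})\,ds\to\int b\,d\bar{\nu}_{\lambda}^{EP}=v_1$, $\bP_{\bar{\nu}_{\lambda}^{EP}}^{EP}$-a.s. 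It remains to transfer this a.s.\ limit to the process started from $\delta_{\eta}$ on $\{\tau=\infty\}$. For this I would invoke the coupling of Section \ref{sec coupling}: on survival the walk has at most linear speed (its increments are dominated by a compound Poisson process of finite mean rate), so for all large $s$ the window $X_s+[-R,R]^d$ lies inside the forward space-time cone on which the contact process started from $\eta$ agrees with its equilibrium version — precisely the cone that the choice of $\gamma_0,\lambda_0$ in Theorem \ref{thm CE} is designed to capture. Since $b$ is local of range $R$ and bounded, the trajectories $s\mapsto b(\eta_s^{EP})$ then coincide with those of a stationary ergodic version for all large $s$, and the bounded finite-time discrepancy is annihilated by the factor $t^{-1}$; hence the time average has the same limit $v_1$.

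Combining (i) and (ii) yields $P^{\eta}$-a.s.\ that $t^{-1}X_t\to v_1$ on $\{\tau=\infty\}$ and $t^{-1}X_t\to v_0$ on $\{\tau<\infty\}$, which is the assertion \eqref{eq LLN}. For the final statement, starting the contact process from $\bar{\nu}_{\lambda}$ forces $\bP_{\bar{\nu}_{\lambda}}(\tau=\infty)=1$, so only the survival analysis is needed; Theorem \ref{thm CE3} supplies the time-ergodic measure $\bar{\nu}_{\lambda}^{EP}$ without the ellipticity assumption, and the transfer step is cleaner because the underlying environment is already in its time-stationary upper equilibrium, so the standard equilibrium coupling replaces the cone argument. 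I expect the main obstacle to be exactly this transfer in the survival case: Theorem \ref{thm CE} only delivers Ces\`aro convergence of the one-time marginals (hence convergence \emph{in mean} of the drift average), whereas the law of large numbers requires the stronger \emph{a.s.} convergence of the time average along the genuine trajectory. Converting the time-ergodicity of $\bar{\nu}_{\lambda}^{EP}$ together with the cone control of $(X_s)$ into such a pathwise statement — in particular ruling out that the walk ever escapes the coupled region — is where the real work, and the role of the perturbative parameter regime, resides.
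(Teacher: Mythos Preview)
Your approach is correct and close in spirit to the paper's, but uses a different decomposition. The paper applies the ergodic theorem directly to a path functional: with $F_z$ counting the $z$-shifts of $\eta^{EP}$ in a unit time interval and $F:=\sum_z z F_z$, one has $X_T - X_0 = \sum_{n=1}^T F(\eta^{EP}_{[n-1,n]})$ exactly, so the LLN under $\bar\nu_\lambda^{EP}$ follows from time-ergodicity of $\bP_{\bar\nu_\lambda^{EP},\lambda}^{EP}$ alone, with no need to isolate a martingale part or run a truncation argument. Your martingale-plus-compensator route reaches the same endpoint with a little more work; the paper's device of reading the displacement directly off the CPSRW path avoids your step~(i) entirely and yields $v_1=\int b\,d\bar\nu_\lambda^{EP}$ in one stroke.

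On the survival-case transfer you have correctly located the crux and the relevant tool. The paper handles it via Proposition~\ref{prop coupling}: fix a configuration in the ergodic set of $\bar\nu_\lambda^{EP}$ (necessarily with infinitely many infections), couple both contact processes inside the cone via Lemma~\ref{lem CE}, and then use ellipticity to force $X_T^1 = X_T^2$ on an event $B_x$ of positive probability independent of the environments, after which the walks remain glued by Lemma~\ref{lem coupling}(c). One point your sketch underplays: agreement of the \emph{environments} inside the cone is not enough by itself---the two walks must also sit at the same site at the moment their clocks merge, and this is precisely what ellipticity buys. For the non-elliptic statement starting from $\bar\nu_\lambda$, the cone argument for the environments is still required; ellipticity is replaced by conditioning on $\{N_T=0\}$ and using the time-stationarity of the contact process under $\bar\nu_\lambda$ (Proposition~\ref{prop coupling2}), which is a specific trick rather than a generic ``equilibrium coupling''.
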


\begin{rem}\label{rem sunday}
Note that $\bP_{\eta}(\tau =\infty) =1$ if and only if $\eta$ has infinitely many infections, as follows by  \cite[Theorem 2.30]{LiggettSIS1999}. In particular, the limit in \eqref{eq LLN} equals $v_1 \in \reals^d$ when the contact process is started from the upper invariant measure $\bar{\nu}_{\lambda}$.
\end{rem}

Presumably, the law of large numbers in Theorem \ref{cor 2} can be extended to a functional central limit theorem under the annealed law. For this, from the existence of $\bar{\nu}_{\lambda}^{EP}$ that is ergodic under $(\eta_t^{EP})$, martingale methods (as used e.g.\ in \cite{RedigVolleringRWDRE2013}) seem useful.

To this end, a remark about the critical case (i.e., when $\lambda=\lambda_c$) is in place.
In this case, $\bar{\nu}_{\lambda_c} = \delta_{\bar{0}}$, as was proven by \citet{BezuidenhoutGrimmettCP1990}. This result has since been extended to several models.
On the other hand, still for $\lambda=\lambda_c$, 
the contact process process as seen from the rightmost particle is known to have a non-trivial invariant measure, as shown in \citet*{CoxDurrettSchinazi1991}.

It is not difficult to show that, by using Theorem 4.5 in \cite{BethuelsenVolleringRWDRE2016} and monotonicity of the contact process, for $\lambda=\lambda_c$, any invariant measure for the CPSRW process concentrates on configurations having $0$ asymptotic density. We believe that, under reasonable (ellipticity) assumptions, the CPSRW process with  $\lambda=\lambda_c$ has no non-trivial invariant measure. However, to show this rigorously seems challenging since the critical contact process has slowly decaying space-time correlation structure.

\section{Preliminaries about the contact process}\label{sec contact}

Important to our approach is the existence of a coupling $\hP_{\eta,\omega}^{\lambda}$ of the contact process started from any two  $\eta, \omega \in \Omega$. The canonical choice is the graphical construction coupling, see p.\ 32-34  in \cite{LiggettSIS1999}, however, any other coupling satisfying \eqref{eq coupling condition} and \eqref{eq coupling condition2} below will do just as fine. 

For $\eta,\omega \in \Omega$, the coupled pair $(\eta_t^1,\eta_t^2)_{t\geq 0}$ denotes two copies of the contact process, started from $\eta_0^1=\eta$ and $\eta_0^2=\omega$ respectively. Recall that, by definition, a coupling has the marginals
\begin{align}
\widehat{\bP}^{\lambda}_{\eta,\omega}(\eta_t^1\in\cdot)=\bP_{\eta,\lambda}(\eta_t\in\cdot) \text{ and }\widehat{\bP}_{\eta,\omega
}^{\lambda}(\eta_t^2\in\cdot)=\bP_{\omega,\lambda}(\eta_t\in\cdot).\end{align}
We are in this paper mainly interested in the contact process with $\lambda>\lambda_c$ for which $\bar{\nu}_{\lambda}$ is non-trivial. In this regime a more global description of the contact process is at hand and known as the shape theorem. For this, denote by $(\eta_t^o)$ and $(\eta_t^{\bar{1}})$ the contact process started from only the origin initially infected and the entire lattice initially infected respectively and define for $t\geq0$,
\begin{align}
&H_t := \{ x\in \Latd \colon \eta_s^o(x) = 1 \text{ for some } 0\leq s \leq t \}; 
 \\&K_t := \{ x\in \Latd \colon \eta_s^o(x) = \eta_s^{\bar{1}}(x) \: \forall \: s \geq t \}.
\end{align} 
$H_t$ is the set of sites which have been visited by an infection by time $t$ when the contact process is  started with only the origin infected at time $0$. $K_t$ is the subset of $\Latd$ where $(\eta_t^o)$ and $(\eta_t^{\Latd})$ remain coupled for all time after time $t$. 
The next result shows that when $\lambda>\lambda_c$, then $t^{-1} (H_t \cap K_t)$ has an asymptotic shape. To state the result it is convenient to consider
\begin{align}
\bar{H}_t := \bigcup_{x \in H_t} (x+Q) \text{ and } \bar{K}_t := \bigcup_{x \in K_t} (x+Q), \quad  Q= [-\frac{1}{2},\frac{1}{2}]^d.
\end{align}
Lastly, for $\omega \in \Omega$, denote by $\tau^{\omega} :=  \inf \{ t \geq 0 \colon \eta_t^{\omega} = 1_{\bar{0}} \}$ the time until the contact process started from $\omega$ ``dies out''.  We write $\tau^o :=  \inf \{ t \geq 0 \colon \eta_t^o = 1_{\bar{0}} \}$ for the case when $\omega(x)=1$ for $x=o$ only.

We are now prepared to state a version of the shape theorem, which in this generality is due to \citet{GaretMarchandShapeCPRE2012}; see Theorem 3 therein. 

\begin{thm}[The shape theorem]\label{thm shape}
Suppose $\lambda>\lambda_c$. There exists a convex set $D=D(\lambda) \subset \reals^d$ such that, for any $\epsilon>0$, on the event $\{\tau^o = \infty \}$,
\begin{align}\label{eq shape theorem}
\lim_{T \rightarrow \infty} \hP_{o,\bar{1}}^{\lambda} \left( (1-\epsilon)  D \subset \frac{1}{t}(\bar{H}_t \cap \bar{K}_t) \subset 
 \frac{1}{t} \bar{H}_t \subset (1+\epsilon)  D \: \forall t \geq T \right) =1.
\end{align}
Moreover, there is a function $f\colon (\lambda_c,\infty) \rightarrow (0,\infty)$, non-decreasing, and such that 
$\{x \in \reals^d \colon \norm{x}_1 \leq f(\lambda)\} \subset D(\lambda) \text{ and }\lim_{\lambda \rightarrow \infty}f(\lambda)=\infty.
$ 
\end{thm}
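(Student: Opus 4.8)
The plan is to reduce the statement to an application of Kingman's subadditive ergodic theorem through a suitable notion of passage time, following the strategy of first-passage percolation adapted to the contact process. Working on the graphical representation, which produces the coupling $\hP_{o,\bar 1}^\lambda$ and is invariant under space-time translations, I would first define for each $x \in \Latd$ a hitting time measuring how long the infection started at the origin needs to reach $x$. The immediate difficulty is that this naive hitting time is not subadditive and can be infinite on the event that the local infection dies out before reaching $x$; to circumvent this I would use the \emph{essential hitting time} $\sigma(x)$ of Garet and Marchand, built by iterating restart arguments at a sequence of stopping times so that one only counts attempts from configurations that go on to survive. The key structural properties to establish are that $\sigma$ is almost subadditive, i.e.\ $\sigma(x+y) \le \sigma(x) + \sigma(y)\circ \theta_x + r(x,y)$ for a controllable remainder $r$, and that $\sigma(x)$ has good integrability.

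Both properties rest on the block construction of Bezuidenhout and Grimmett. First I would record that, for $\lambda>\lambda_c$, on the event $\{\tau^o=\infty\}$ the infection dominates a supercritical oriented percolation process, so that the probability that the process has survived but failed to infect a linearly growing region by time $t$ decays (stretched-)exponentially. This yields uniform moment bounds on the $\sigma(x)$ and, crucially, controls the restart remainders, so that the almost-subadditive version of Kingman's theorem applies. It gives, for each direction $x \in \reals^d$, an a.s.\ and $L^1$ limit $\mu(x) = \lim_n n^{-1}\sigma(nx)$, where $\mu$ is finite, subadditive and positively homogeneous, hence a (semi-)norm; I would then set $D = \{x \in \reals^d \colon \mu(x) \le 1\}$, which is convex.

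To upgrade these radial limits to the uniform statement \eqref{eq shape theorem} I would establish a large-deviation estimate of the form $\hP_{o,\bar1}^\lambda(|\sigma(x)-\mu(x)| > \epsilon \norm{x}) \le C e^{-c\norm{x}^{\beta}}$, again from the exponential block estimates, and feed it into the standard covering argument: cover the boundary of $D$ by finitely many directions, control all lattice points of norm $\le t$ simultaneously via a Borel--Cantelli argument, and translate the statement about $\sigma$ into one about $\frac1t \bar H_t$. The coupled set $K_t$ requires an additional coupling step: on survival, inside the region already reached by the infection the process started from the origin agrees with the process started from $\bar 1$ after a bounded delay, because the restricted (finite) dynamics couples to the upper invariant measure $\bar\nu_\lambda$; combining this with the control of $\bar H_t$ shows $\frac1t(\bar H_t \cap \bar K_t)$ fills out $(1-\epsilon)D$. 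Finally, the lower bound $\{x\colon \norm{x}_1 \le f(\lambda)\}\subset D(\lambda)$ with $f$ non-decreasing and $f(\lambda)\to\infty$ follows by comparing, via the standard coupling monotone in $\lambda$, the contact process to an oriented percolation whose density increases with $\lambda$; the corresponding percolation speed is bounded below by a quantity growing in $\lambda$ (in fact linearly for large $\lambda$), and monotonicity of $f$ is inherited from the monotone coupling in $\lambda$.

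The main obstacle is the construction and analysis of the essential hitting time: proving the almost-subadditivity with a remainder that is negligible under Kingman's theorem, and simultaneously securing the integrability of $\sigma$ and the large-deviation control, all of which hinge on a careful use of the restart property and the Bezuidenhout--Grimmett renormalization. The coupling argument controlling $K_t$ is the secondary difficulty, since one must quantify how quickly the process inside the infected region relaxes to $\bar\nu_\lambda$.
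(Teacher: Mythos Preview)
The paper does not actually prove this theorem: it is stated as a known result and attributed to Garet and Marchand (Theorem~3 in \cite{GaretMarchandShapeCPRE2012}), with no argument given beyond the citation. Your sketch is essentially an outline of the Garet--Marchand proof itself---the essential hitting time $\sigma(x)$, almost-subadditivity with a controlled remainder, Kingman's theorem to obtain the norm $\mu$, large-deviation bounds from the Bezuidenhout--Grimmett block construction to upgrade to the uniform shape statement, and a separate coupling argument for $K_t$---so in that sense it is correct. There is therefore nothing in the paper to compare your approach against; you are simply reproducing (at a high level) the proof the paper invokes by reference.
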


Theorem \ref{thm shape} implies mixing properties for the contact process when started from other configurations than only the origin initially infected, as we show next. 

\begin{lem}\label{lem CE}
Let $\lambda>\lambda_c$ and consider the contact processes $(\eta_t^{\eta})$ and $(\eta_t^{\bar{1}})$, initialised from $\eta$ and $\bar{1}$ respectively, where $\eta \in \Omega \setminus \{\bar{0}\}$. Then, for any $\epsilon>0$ and with $D$ as in Theorem \ref{thm shape},
\begin{align}\label{eq coupling condition}
 \lim_{T \rightarrow \infty} \hP_{\eta,\bar{1}}^{\lambda} \left(\eta_t^{\eta}(x) = \eta_t^{\bar{1}}(x)\:  \forall\: x \in t(1-\epsilon) D \: \forall \: t \geq T  \mid \tau^{\eta}=\infty \right) =1.
\end{align}
\end{lem}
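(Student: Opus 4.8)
The plan is to carry out the entire argument inside the graphical construction, so that all the processes $(\eta^\zeta_t)$ with $\zeta\le\bar 1$ live on one probability space under $\hP^\lambda_{\eta,\bar1}$ and are coupled monotonically, and then to reduce the general statement to the single-site shape theorem via a monotone sandwich. Write $\mathcal A_T$ for the event that $\eta^\eta_t(x)=\eta^{\bar1}_t(x)$ for all $x\in t(1-\epsilon)D$ and all $t\ge T$; this is the event in the lemma. Since $\mathcal A_T$ increases with $T$ to $\mathcal A_\infty:=\bigcup_T\mathcal A_T$, monotone convergence of conditional probabilities reduces the claim to showing $\{\tau^\eta=\infty\}\subseteq\mathcal A_\infty$ up to $\hP^\lambda_{\eta,\bar1}$-null sets. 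The mechanism throughout is that for any site $y$ with $\eta(y)=1$ the graphical coupling gives $\eta^y_t\le\eta^\eta_t\le\eta^{\bar1}_t$ pointwise, so wherever $\eta^y_t(x)=\eta^{\bar1}_t(x)$ one automatically has $\eta^\eta_t(x)=\eta^{\bar1}_t(x)$.

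First I would record the single-site consequence of Theorem \ref{thm shape}. By translation covariance of the graphical construction and $\theta_y\bar1=\bar1$, the shape theorem applied at $y$ shows that, on $\{\tau^y=\infty\}$ and for any $\epsilon'\in(0,\epsilon)$, almost surely there is a random $T$ such that for all $t\ge T$ the integer points of $y+t(1-\epsilon')D$ lie in the set on which $(\eta^y_s,\eta^{\bar1}_s)$ remain coupled for all $s\ge t$; in particular $\eta^y_t(x)=\eta^{\bar1}_t(x)$ there. A short convexity argument, using that $0\in\operatorname{int}(D)$ (guaranteed by $\{x:\norm{x}_1\le f(\lambda)\}\subset D$ with $f(\lambda)>0$) and that $D$ is compact, shows $t(1-\epsilon)D\subset y+t(1-\epsilon')D$ once $t$ is large. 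Hence on $\{\tau^y=\infty\}$ the sandwich gives $\eta^\eta_t=\eta^{\bar1}_t$ on $t(1-\epsilon)D$ for all large $t$, i.e. $\{\tau^y=\infty\}\subset\mathcal A_\infty$ almost surely.

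Next, for a \emph{finite} $\zeta\le\eta$ with infected sites $y_1,\dots,y_m$, the absorbing nature of $\bar 0$ makes each event $\{\eta^{y_j}_t\neq\bar 0\}$ nonincreasing in $t$, so a pigeonhole argument yields $\{\tau^\zeta=\infty\}=\bigcup_{j}\{\tau^{y_j}=\infty\}$. Combined with the previous step and $\eta^\zeta\le\eta^\eta$, this gives $\{\tau^\zeta=\infty\}\subset\mathcal A_\infty$ almost surely. Taking the union over all finite $\zeta\le\eta$, it then remains only to establish the identity $\sup_{\zeta\le\eta\text{ finite}}\hP^\lambda_{\eta,\bar1}(\tau^\zeta=\infty)=\hP^\lambda_{\eta,\bar1}(\tau^\eta=\infty)$, since by monotone continuity this is equivalent to $\hP^\lambda_{\eta,\bar1}\big(\bigcup_\zeta\{\tau^\zeta=\infty\}\big)=\hP^\lambda_{\eta,\bar1}(\tau^\eta=\infty)$, whence $\hP^\lambda_{\eta,\bar1}(\{\tau^\eta=\infty\}\setminus\mathcal A_\infty)=0$ and the lemma follows.

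The main obstacle is precisely this last identity, and all the difficulty sits in the case where $\eta$ has infinitely many infections; a priori $\eta^\eta$ could survive through infinitely many individually dying sub-clusters, with no single initial site surviving, which would break the reduction. Here I would invoke $\hP(\tau^\eta=\infty)=1$ (Remark \ref{rem sunday}) together with finite speed of propagation: choosing $N$ infected sites of $\eta$ pairwise far apart, their single-site survival events are asymptotically independent, and since each has probability $\rho:=\hP(\tau^o=\infty)>0$, the finite sub-configuration $\zeta_N$ supported on them satisfies $\hP(\tau^{\zeta_N}=\infty)\ge 1-(1-\rho)^N-o(1)\to 1$. This forces $\sup_\zeta\hP(\tau^\zeta=\infty)=1=\hP(\tau^\eta=\infty)$, completing the reduction (for finite $\eta$ the identity is trivial, taking $\zeta=\eta$). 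Making the ``asymptotically independent'' step rigorous—quantifying the decoupling of far-apart survival events in the graphical construction, e.g.\ via a finite-time, finite-range approximation of $\{\tau^y=\infty\}$—is the one genuinely technical point, and is in the same spirit as the argument behind \cite[Theorem I.2.30]{LiggettSIS1999} cited in Remark \ref{rem sunday}.
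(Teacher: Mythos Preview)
Your proposal is correct and follows essentially the same approach as the paper: single-site via the shape theorem and translation, finite configurations via additivity $\{\tau^\zeta=\infty\}=\bigcup_j\{\tau^{y_j}=\infty\}$ together with the monotone sandwich $\eta^{y_j}\le\eta^\eta\le\eta^{\bar 1}$, and infinite $\eta$ via approximation by finite sub-configurations whose survival probability tends to $1$. The only difference is that for this last step the paper simply invokes \cite[Theorem 1.2.30]{LiggettSIS1999} (if $|A_n|\to\infty$ then $\bP(\tau^{A_n}=\infty)\to 1$), so your final paragraph's asymptotic-independence sketch can be replaced by that citation, which you already reference.
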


\begin{proof}
Since the path measure of the contact process is translation invariant with respect to spatial shifts, \eqref{eq coupling condition} holds in the case when $\eta(x)=1$ for only one site $x \in \Latd$, as follows immediately from Theorem \ref{thm shape}. Indeed, for any $0< \epsilon_1< \epsilon_2$, it holds that $t(1-\epsilon_2)D \subset \theta_x t(1-\epsilon_1)D $ for all $t$ sufficiently large.

For $T \in (0,\infty)$, let
$A_T =\{\eta_t^{\eta}(x) \neq \eta_t^{\bar{1}}(x) \text{ for some } x \in t(1-\epsilon) D  \text{ and } t \geq T  \}$. Fix $x_1,\dots,x_n \in \Latd$ and assume that $\eta \in \Omega$ is such that $\eta(y)=1$ only when $y=x_1,\dots,x_n $. By using that the contact process is additive (in particular, that 
$\{ \tau^{\eta} <\infty\} =\cup_{i=1}^n  \{ \tau^{x_i}<\infty\} $), we have
\begin{align}
 \hP_{\eta,\bar{1}}^{\lambda} \left( A_T \mid \tau^{\eta}=\infty \right) 
 = & \hP_{\eta,\bar{1}}^{\lambda} \left( \tau^{\eta}=\infty \right)^{-1}  \hP_{\eta,\bar{1}}^{\lambda} \left(A_T, \tau^{\eta}=\infty \right)
 \\ = & \hP_{\eta,\bar{1}}^{\lambda} \left( \tau^{\eta}=\infty \right)^{-1}  \hP_{\eta,\bar{1}}^{\lambda} \left( A_T \cap \left(\cup_{i=1}^n  \{ \tau^{x_i}=\infty\} \right) \right)
 \\ \leq &  \hP_{\eta,\bar{1}}^{\lambda} \left( \tau^{\eta}=\infty \right)^{-1} \sum_{i=1}^n \hP_{\eta,\bar{1}}^{\lambda} \left( A_T \cap   \{ \tau^{x_i}=\infty\}  \right)
 \\  \leq & \sum_{i=1}^n \hP_{\eta,\bar{1}}^{\lambda} \left( A_T \mid   \{ \tau^{x_i}=\infty\}  \right).
 \end{align}
 Consequently,  \eqref{eq coupling condition} holds when $\eta$ has finitely many $1$'s, since  each term inside the sum in the last equation satisfies  \eqref{eq coupling condition}.

What remains to be shown is that \eqref{eq coupling condition}holds when the contact process is started from a configuration with infinitely many sites infected. In this case, $\tau^{\eta}=\infty$ a.s.\ (see Remark \ref{rem sunday}) and so 
\begin{align}
\hP_{\eta,\bar{1}}^{\lambda} \left( A_T \mid \tau^{\eta}=\infty \right)= \hP_{\eta,\bar{1}}^{\lambda} \left( A_T  \right).
\end{align}
Further, let $(N_n)_{n \in \nat}$ be a sequence such that $\sum_{x \in [-N_n,N_n]^d} \eta(x) \geq n$ and denote by $\sigma_n \in \Omega$ the configuration which equals $\eta$ on $[-N_n,N_n]^d$ and equals $0$ outside $[-N_n,N_n]^d$. Then 
\begin{align}
 \hP_{\eta,\bar{1}}^{\lambda} \left( A_T \right) &\geq \lim_{T \rightarrow \infty} \hP_{\eta,\bar{1}}^{\lambda} \left( A_T \cap \{ \tau^{\sigma_n}=\infty\}  \right) 
 \\ &= \hP_{\eta,\bar{1}}^{\lambda} \left( \tau^{\sigma_n} =\infty  \right)^{-1} \hP_{\eta,\bar{1}}^{\lambda} \left( A_T \mid \tau^{\sigma_n} =\infty \right).
\end{align}
Hence, taking $T \rightarrow \infty$ yields that $\lim_{T \rightarrow \infty} \hP_{\eta,\bar{1}}^{\lambda} \left( A_T \right) \geq \hP_{\eta,\bar{1}}^{\lambda} \left( \tau^{\sigma_n} =\infty  \right)^{-1}$.
Taking $n \rightarrow \infty$,  by  \cite[Theorem 1.2.30]{LiggettSIS1999}, we conclude the proof. 
\end{proof}

We also need to control the contact process started from a finite number of $1$'s and conditioned on dying out, for which we have the following lemma.

\begin{lem}\label{lem CE dies out}
Let $\lambda>\lambda_c$ and consider the contact processes $(\eta_t^{\eta})$ and $(\eta_t^{\bar{0}})$, initialised from $\eta$ and $\bar{1}$ respectively, where $\eta$ satisfies $\sum_{x\in \Latd} \eta(x)<\infty$. Then, for any $\epsilon>0$ and  with $D$ as in Theorem \ref{thm shape},
\begin{align}\label{eq coupling condition2}
 \lim_{T \rightarrow \infty} \hP_{\eta,\bar{0}}^{\lambda} \left(  \eta_t^{\eta}(x) = \eta_t^{\bar{0}}(x)  \: \forall\: x \in t(1-\epsilon) D \: \forall \: t \geq T \mid \tau^{\eta}<\infty \right) =1. 
\end{align}
\end{lem}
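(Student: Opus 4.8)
The plan is to exploit the fact that here the comparison process is the trivial one. Since $\bar{0}$ is an absorbing state for the contact process, under the coupling $\hP_{\eta,\bar{0}}^{\lambda}$ the second coordinate satisfies $\eta_t^{\bar{0}} = \bar{0}$ for every $t \geq 0$, almost surely. Hence the coupling event in \eqref{eq coupling condition2} is simply the requirement that $\eta_t^{\eta}(x) = 0$ for all $x \in t(1-\epsilon)D$ and all $t \geq T$. In contrast to Lemma \ref{lem CE}, no genuine use of the shape theorem (Theorem \ref{thm shape}) is needed; indeed the argument would go through verbatim with $t(1-\epsilon)D$ replaced by all of $\Latd$.

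First I would observe that, on the event $\{\tau^{\eta} < \infty\}$, the process started from $\eta$ also reaches $\bar{0}$ at the finite time $\tau^{\eta}$ and then stays there, so that $\eta_t^{\eta} = \bar{0}$ for all $t \geq \tau^{\eta}$. Writing $B_T$ for the complement of the event appearing in \eqref{eq coupling condition2}, i.e.\ the event that $\eta_t^{\eta}(x) \neq \eta_t^{\bar{0}}(x)$ for some $t \geq T$ and some $x \in t(1-\epsilon)D$, this observation shows that $B_T$ can occur together with $\{\tau^{\eta}<\infty\}$ only if a disagreement occurs before the process has died out, which forces $\tau^{\eta} \geq T$. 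Thus
\[
\{B_T\} \cap \{\tau^{\eta} < \infty\} \subseteq \{T \leq \tau^{\eta} < \infty\}.
\]

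Next I would divide by $\hP_{\eta,\bar{0}}^{\lambda}(\tau^{\eta} < \infty)$, which is strictly positive because $\eta$ has only finitely many infected sites, so there is positive probability that all of them recover before any infection is transmitted. This gives
\[
\hP_{\eta,\bar{0}}^{\lambda}(B_T \mid \tau^{\eta} < \infty) \leq \frac{\hP_{\eta,\bar{0}}^{\lambda}(T \leq \tau^{\eta} < \infty)}{\hP_{\eta,\bar{0}}^{\lambda}(\tau^{\eta} < \infty)}.
\]
Finally, the events $\{T \leq \tau^{\eta} < \infty\}$ decrease, as $T \to \infty$, to $\{\tau^{\eta} = \infty\} \cap \{\tau^{\eta} < \infty\} = \emptyset$, so by continuity from above of the probability measure $\hP_{\eta,\bar{0}}^{\lambda}$ the numerator tends to $0$. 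Consequently $\hP_{\eta,\bar{0}}^{\lambda}(B_T \mid \tau^{\eta} < \infty) \to 0$, which is precisely \eqref{eq coupling condition2}.

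As for where the difficulty lies, there is essentially none of substance, since the comparison is against the frozen configuration $\bar{0}$: the entire content is that a finite configuration conditioned to die out is eventually identically empty. The only points deserving a word of care are that the conditioning event has positive probability, which is guaranteed by the finiteness of the support of $\eta$, and the elementary continuity-from-above step; both are standard. This is why the present lemma is considerably shorter than its survival counterpart, Lemma \ref{lem CE}.
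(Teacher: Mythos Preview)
Your argument is correct and follows the same basic route as the paper: both reduce the coupling event to the statement that, on $\{\tau^{\eta}<\infty\}$, the process $(\eta_t^{\eta})$ is eventually identically~$\bar{0}$, and then bound the conditional probability of the complementary event $A_T$ (your $B_T$) by something tending to~$0$ in~$T$. The only difference is quantitative: the paper invokes \cite[Theorem~1.2.30]{LiggettSIS1999} to assert that $\hP_{\eta,\bar{0}}^{\lambda}(A_T,\ \tau^{\eta}<\infty)$ decays \emph{exponentially} in~$T$, whereas you use the inclusion $B_T\cap\{\tau^{\eta}<\infty\}\subseteq\{T\le\tau^{\eta}<\infty\}$ together with continuity from above, obtaining only $o(1)$. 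Since the lemma only asks for a limit equal to~$1$, your more elementary route suffices and in fact avoids the (here superfluous) additivity decomposition that the paper sets up.
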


\begin{proof}
For $T \in (0,\infty)$, denote by
$A_T =\{ \exists x \in t(1-\epsilon) D \text{ with }t \geq T \colon \eta_t^{\eta}(x) =1  \}$. Fix $x_1,\dots,x_n \in \Latd$ and assume that $\eta \in \Omega$ is such that $\eta(y)=1$ only when $y=x_1,\dots,x_n $. By again using that the contact process is additive, we have that
\begin{align}
 \hP_{\eta,\bar{0}}^{\lambda} \left( A_T \mid \tau^{\eta}<\infty \right) 
 = & \hP_{\eta,\bar{0}}^{\lambda} \left( \tau^{\eta}<\infty \right)^{-1}  \hP_{\eta,\bar{0}}^{\lambda} \left(A_T, \tau^{\eta}<\infty \right)
 \\ = & \hP_{\eta,\bar{0}}^{\lambda} \left( \tau^{\eta}<\infty \right)^{-1}  \hP_{\eta,\bar{0}}^{\lambda} \left( A_T \cap \left(\cap_{i=1}^n  \{ \tau^{x_i}<\infty\} \right) \right).
 \end{align}
 By \cite[Theorem 1.2.30]{LiggettSIS1999}, the last equation decays exponentially in $T$ from which we conclude \eqref{eq coupling condition2}.
 \end{proof}

\section{Coupling construction}\label{sec coupling}

Given the coupling $\hP_{\eta,\omega}^{\lambda}$ of the contact process from the previous section, for each $T \in [0,\infty)$, we show in the following lemma how to extend it to a coupling $\hP_{\eta,\omega,T}^{\lambda}$ also containing the evolution of two random walks $(X_t^1,X_t^2)_{t\geq 0}$ on $(\eta_t^1,\eta_t^2)_{t\geq 0}$. The coupling construction is motivated by the coupling used in \cite{HollanderSantosRWCP2013}, Section 3, and can be seen as a generalisation of their approach to general dimensions and general transition kernels. 

Before stating the lemma we need to introduce some notation.
For $\gamma \in (0,\infty)$, denote by $\mathcal{R}(\gamma) \subset \reals^d$ the convex hull of the transition kernels of $(X_t)$,  that is,
\begin{align}\label{eq range}
\mathcal{R}(\gamma) := \gamma \cdot \text{conv}\left( \sum_{z \in \Latd} z \alpha(\eta,z), \eta \in \Omega \right).
\end{align}
Further, for $\eta,\omega \in \Omega$, $T,\epsilon>0$ and $n\in \nat$, let
\begin{align}\label{eq CTepsilon}
C_{T,n,\epsilon}(\eta,\omega) :=  \{ \eta_s^{\eta}(x) = \eta_s^{\omega}(x) \: \forall \: x \in s(1+\epsilon) \mathcal{R}(\gamma) +[-n,n]^d, s \in [T,\infty) \}
\end{align} denote the event that the contact processes started from $\eta$ and $\omega$ respectively are perfectly coupled inside $ s(1+\epsilon) \mathcal{R}(\gamma) +[-R,R]^d \subset \Latd$ for all $s \geq T$, and let
\begin{align}\label{eq DTepsilon}
D_{T,\epsilon} := \{  X_t^1, X_t^2 \in t(1+\epsilon) \mathcal{R}(\gamma) \: \forall \: t \geq T \}.
\end{align}

\begin{lem}\label{lem coupling}
Let $T \in [0,\infty)$ and let $\eta,\omega \in \Omega$.
There exists a coupling $\hP_{\eta,\omega,T}^{\lambda}$ with the following properties:

\begin{description}
\item[(a)] (Marginals) The coupling 
supports two contact processes and
corresponding random walks:
\begin{enumerate}
\item[1.] $\widehat{\bP}_{\eta,\omega,T}^{\lambda}((\eta^1_t,X_t^1)\in\cdot) = \tilde{\bP}
_{\eta,\lambda}((\eta_t,X_t) \in\cdot)$;
\item[2.] $\widehat{\bP}_{\eta,\omega,T}^{\lambda}((\eta^2_t,X_t^2)\in\cdot) = \tilde{\bP}
_{\eta,\lambda}((\eta_t,X_t) \in\cdot)$;
\end{enumerate}
where $\tilde{\bP}_{\eta,\lambda}$ is the path measure of the joint process $(\eta_t,X_t)$.
\item[(b)] (Extension of $\widehat{\bP}^{\lambda}_{\eta,\omega}$) The
contact processes behave
as under $\widehat{\bP}_{\eta,\omega}^{\lambda}$,
\[
\widehat{\bP}_{\eta,\omega,T}^{\lambda}\bigl(\bigl(\eta^1_t,
\eta_t^2\bigr)\in\cdot\bigr) = \widehat{\bP
}{}^{\lambda}_{\eta,\omega}\bigl(\bigl(\eta_t^1,
\eta_t^2\bigr) \in\cdot\bigr).
\]
\item[(c)] (Coupling of the walkers)
The jumping times of $X_t^1$ and $X_t^2$ are independent up to time $T$ and identical after time $T$.
Furthermore, for any $\epsilon>0$, \begin{align}
\lim_{T \rightarrow \infty} \hP_{\eta,\omega,T}^{\lambda} \left( X_t^1= X_t^2  \: \forall \: t \geq T \mid X_T^1=X_T^2, C_{T,R,\epsilon}(\eta,\omega), D_{T,\epsilon} \right) =1.
\end{align} 
\end{description}
\end{lem}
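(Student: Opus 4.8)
The plan is to construct the coupling $\hP_{\eta,\omega,T}^{\lambda}$ explicitly by building the two random walks on top of the already-existing contact-process coupling $\hP_{\eta,\omega}^{\lambda}$, using a shared Poisson clock/uniform-mark structure. Starting from the graphical representation underlying $\hP_{\eta,\omega}^{\lambda}$, I would attach to each walker a driving mechanism governed by independent Poisson processes of jump attempts together with i.i.d.\ uniform marks that select the jump increment $z$ according to the rates $\gamma\alpha(\theta_{X_t^i}\eta_t^i, z)$. Property (b) is then automatic, since the contact-process marginals are untouched by adding the walkers, and property (a) follows because each pair $(\eta_t^i, X_t^i)$ is driven by the correct rates, giving exactly the joint law $\tilde\bP_{\eta,\lambda}$. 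The point requiring care is the coupling of the two clocks: before time $T$ the two walkers use \emph{independent} Poisson clocks and independent marks, while from time $T$ onward they share a \emph{single} common clock and common uniform marks. This makes (c)'s first sentence hold by fiat.

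The substantive content is the limit in (c). Here I would argue as follows. On the event $\{X_T^1=X_T^2\}$ the two walkers start at a common site at time $T$; on $C_{T,R,\epsilon}(\eta,\omega)$ the two contact-process environments agree on the region $s(1+\epsilon)\cR(\gamma)+[-R,R]^d$ for all $s\geq T$; and on $D_{T,\epsilon}$ both walkers stay inside $t(1+\epsilon)\cR(\gamma)$ for all $t\geq T$. The key observation is that, by the finite-range Assumption \eqref{assump 2}, a walker located at position $x\in t(1+\epsilon)\cR(\gamma)$ determines its jump rates only through the environment restricted to $x+[-R,R]^d\subset t(1+\epsilon)\cR(\gamma)+[-R,R]^d$. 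Hence, \emph{as long as} both walkers occupy the same site inside this space-time cone and the environments agree there, the two rate functions $\gamma\alpha(\theta_{X_t^i}\eta_t^i,\cdot)$ are identical; since after time $T$ the walkers use the same clock and the same marks, they execute exactly the same jump and therefore remain at a common site. This is a standard ``agreement propagates'' argument: I would formalize it by showing that the stopping time $\sigma:=\inf\{t\geq T: X_t^1\neq X_t^2\}$ cannot be finite on the conditioning event. The only way the walkers could decouple after $T$ is if one of them exits the cone $t(1+\epsilon)\cR(\gamma)$ (excluded on $D_{T,\epsilon}$) or reaches the boundary region where the environments have not been guaranteed to agree.

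The remaining subtlety is the discrepancy between the $[-n,n]^d$ in \eqref{eq CTepsilon} with $n=R$ and the cone in \eqref{eq DTepsilon}: I must verify that when a walker sits at $x\in t(1+\epsilon)\cR(\gamma)$, its relevant neighbourhood $x+[-R,R]^d$ is genuinely contained in the coupled region $s(1+\epsilon)\cR(\gamma)+[-R,R]^d$ evaluated at the \emph{same} time $s=t$, which holds directly from $C_{T,R,\epsilon}$ with $n=R$. Once this containment is in place, the conditional probability on the left of (c) is in fact equal to $1$ for every $T$ on the conditioning event — the limit is trivial — because the deterministic agreement argument leaves no room for decoupling.

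Thus I expect the \textbf{main obstacle} to be purely organizational rather than analytic: namely setting up the shared-clock/shared-mark coupling carefully enough that (i) the marginals in (a) are provably correct (each walker sees the right rates at all times, including the independent-clock phase before $T$ where one must check that using independent randomness still reproduces the correct individual law $\tilde\bP_{\eta,\lambda}$), and (ii) the ``common rates $\Rightarrow$ common jumps'' step is justified at the level of the Poisson construction rather than merely heuristically. I would devote most of the write-up to constructing the driving noise explicitly and then verify (a), (b), and (c) in turn, with (c) reducing to the deterministic containment check described above; no genuinely hard estimate is needed, as all the probabilistic difficulty has already been absorbed into Lemmas \ref{lem CE} and \ref{lem CE dies out}, which are what will later be used to show that the conditioning events $C_{T,R,\epsilon}$ and $D_{T,\epsilon}$ have high probability.
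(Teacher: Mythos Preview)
Your proposal is correct and matches the paper's own proof essentially step for step: the paper also extends $\hP_{\eta,\omega}^{\lambda}$ by adjoining Poisson clocks of rate $\gamma\|\alpha\|_1$ together with i.i.d.\ uniform marks, makes the two walkers use independent clocks/marks on $[0,T]$ and share a common clock/mark sequence after $T$, and then deduces (a), (b) immediately and (c) from the finite-range assumption \eqref{assump 2}. Your observation that, on the conditioning event $\{X_T^1=X_T^2\}\cap C_{T,R,\epsilon}(\eta,\omega)\cap D_{T,\epsilon}$, the conditional probability is in fact equal to $1$ for every $T$ (so the limit is trivial) is correct and slightly sharper than what the paper spells out.
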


\begin{proof}

To obtain the properties listed above, we extend the original coupling $\widehat{\bP}{}_{\eta,\omega}^{\lambda}$
to contain three Poisson processes $N^{i}$ with $i \in \{1,2,3\}$,  all with rates $\lambda_i:= \gamma \norm{\alpha}_1$, as well as a
sufficient supply of independent uniform $[0,1]$ variables for each $i \in \{1,2,3\}$, denoted by $U^{i}$. The Poisson processes are chosen independent of $\widehat{\bP}{}_{\eta,\omega}^{\lambda}$ and thus property b) is immediate.  

To obtain the properties described in a) and c), we chose the Poisson processes $N^{1}$ and $N^{3}$ independent from each other, as well as the corresponding variables $U^{1}$ and $U^{3}$. Furthermore, the process $N^{2}$ is given by
\begin{align}
N_t^{2} := \left\{\begin{array}{cc} N_t^{3} & \text{if }t\leq T \\N_T^{3}+N_t^{1} - N_T^{1} & \text{if }t>T,\end{array}\right.
\end{align}
and the variables $U^{2}$ are given by
\begin{align}
U_n^{2} := \left\{\begin{array}{cc} U_n^{3}& \text{ if } n\leq N_T^{3}  \\ U_{n+N_T^1-N_T^3}^{1} & \text{ otherwise.}\end{array}\right.
\end{align}
Now, for $j\in \{1,2\}$, the random walk
$X^j$ starts from $o$ and exclusively (but not
necessarily) jump when the Poisson clocks $N^{j}$ rings. 
To make this precise, enumerate $\Latd = \{z_1,z_2,z_3,\dots\}$ and let for each $\eta \in \Omega$ and $m \in \nat$, $p(\eta,m ) := \sum_{i=1}^m \alpha(\eta,z_i)$.
When the
clock $N^{j}$ rings for the $k$'th time, the random walk jumps from $X^j_t$ to $X^j_t+z_i$ only if the
uniform $[0,1]$ variable $U^j_k$ satisfies 
\begin{align}
\norm{\alpha}_1^{-1} p(\theta_{X_t^j}\eta_t^j,z_{m-1}) \leq U_k^j< \norm{\alpha}_1^{-1} p(\theta_{X_t^j}\eta_t^j,z_{m}).
\end{align}
Clearly this yields property a). Furthermore, note that both the random walks use independent Poisson clocks and $U$'s up to time $T$, and share the same Poisson clocks and 
$U$'s after time $T$. Property c) follows as a consequence of this and since $(X_t)$ satisfies \eqref{assump 2}.
\end{proof}

\section{Proofs}\label{sec results Stein}

\subsection{Coupling argument}
In this subsection we present the coupling argument essential for the proofs of Theorems \ref{thm CE} and \ref{thm CE3}. For this, we first note that, as a simple consequence of our assumptions on the transition kernels of the random walk (recall Assumptions \eqref{assump 1}, \eqref{assump 2} and Definition \eqref{eq range}), the following lemma holds.

\begin{lem}\label{lem coupling help}
For any $\epsilon>0$ and any  $\eta,\omega \in \Omega$, it holds that
\begin{align}
\lim_{t \rightarrow \infty} \hP_{\eta,\omega,T} \left( X_s^1,X_s^2 \in s(1+\epsilon) \mathcal{R}(\gamma) \: \forall \: s \geq t \right) =1, \quad \text{ for any } T>0.
\end{align}
\end{lem}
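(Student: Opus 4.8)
The plan is to establish the statement for a single walk and then combine the two contributions by a union bound. By the marginal property (a) of Lemma~\ref{lem coupling}, under $\hP_{\eta,\omega,T}$ each of $X^1$ and $X^2$ evolves as the random walk $(X_t)$ driven by a contact process from a fixed initial configuration, so in particular the law of each individual walk does not depend on $T$. Since the events $\{X_s\in s(1+\epsilon)\mathcal{R}(\gamma)\ \forall s\geq t\}$ increase to $\{X_s\in s(1+\epsilon)\mathcal{R}(\gamma)\text{ eventually}\}$ as $t\to\infty$, continuity from below reduces everything to the almost sure statement that $X_s/s\in(1+\epsilon)\mathcal{R}(\gamma)$ for all large $s$. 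To attack this I would write the walk in terms of its drift and a martingale. As $(X_t)$ is a pure jump process of total rate bounded by $\gamma\norm{\alpha}_1$, the process
\[
M_t:=X_t-\int_0^t b(\theta_{X_s}\eta_s)\,ds,\qquad b(\zeta):=\gamma\sum_{z\in\Latd}z\,\alpha(\zeta,z),
\]
is a martingale. By the definition \eqref{eq range} of $\mathcal{R}(\gamma)$ we have $b(\zeta)\in\mathcal{R}(\gamma)$ for every $\zeta\in\Omega$, and since $\mathcal{R}(\gamma)$ is convex the time average $s^{-1}\int_0^s b(\theta_{X_u}\eta_u)\,du$ again lies in $\mathcal{R}(\gamma)$; hence the drift term belongs to $s\,\mathcal{R}(\gamma)$ and the problem is reduced to showing $M_s/s\to 0$ almost surely.

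The main obstacle is precisely this strong law, because Assumption \eqref{assump 1} controls only the first moment $\sum_z\norm{z}\sup_\eta\abs{\alpha(\eta,z)}$ of the jumps, so the predictable quadratic variation of $M$ need not grow linearly and the jumps are not uniformly bounded. I would resolve this by a truncation. Fix $K$ and split each jump according to whether $\norm{z}\leq K$ or $\norm{z}>K$, writing $M=M^{\leq K}+M^{>K}$ accordingly. For the small jumps one has $\norm{z}^2\leq K\norm{z}$, so the quadratic variation of $M^{\leq K}$ grows at most at rate $\gamma K\norm{\alpha}_1$; the $L^2$ strong law for martingales (via the maximal inequality along the times $2^n$ together with Borel--Cantelli) then gives $M^{\leq K}_s/s\to 0$ almost surely. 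For the large jumps, the accumulated absolute displacement is dominated by a compound Poisson process of rate $\delta_K:=\gamma\sum_{\norm{z}>K}\norm{z}\sup_\eta\abs{\alpha(\eta,z)}$, so the elementary strong law yields $\limsup_s\norm{M^{>K}_s}/s\leq 2\delta_K$ almost surely. Since $\norm{\alpha}_1<\infty$ forces $\delta_K\to 0$ as $K\to\infty$, combining the two parts and letting $K\to\infty$ gives $\limsup_s\norm{M_s}/s=0$, as required.

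It remains to pass from $M_s/s\to0$ to the geometric conclusion. Since $s^{-1}\int_0^s b(\theta_{X_u}\eta_u)\,du\in\mathcal{R}(\gamma)$ and $M_s/s\to0$, we obtain $\dist(X_s/s,\mathcal{R}(\gamma))\to0$ almost surely. Because $\mathcal{R}(\gamma)$ is a bounded convex set containing the origin in its interior, there is $r>0$ with $B(0,r)\subset\mathcal{R}(\gamma)$, and a short convexity argument shows that every point at distance less than $\epsilon r$ from $\mathcal{R}(\gamma)$ lies in $(1+\epsilon)\mathcal{R}(\gamma)$. Consequently $X_s/s\in(1+\epsilon)\mathcal{R}(\gamma)$ for all large $s$ almost surely, and by the reduction of the first paragraph together with a union bound over the two walks this yields the lemma, uniformly in $T$.
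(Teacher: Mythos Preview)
The paper offers no proof of this lemma beyond calling it ``a simple consequence'' of Assumptions~\eqref{assump 1}, \eqref{assump 2} and Definition~\eqref{eq range}, so your argument is necessarily more detailed than the original. Your martingale decomposition $X_s=\int_0^s b(\theta_{X_u}\eta_u)\,du+M_s$ with $b(\zeta)\in\mathcal{R}(\gamma)$, together with the truncation proof of $M_s/s\to0$ under only the first-moment bound~\eqref{assump 1}, is correct and yields the almost-sure statement $\dist\bigl(X_s/s,\mathcal{R}(\gamma)\bigr)\to0$. Combined with the union bound over the two walks this is exactly the content one needs downstream.

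The gap is in the last paragraph: you assert that $\mathcal{R}(\gamma)$ contains the origin in its interior, and this is not implied by any of the standing assumptions. Take $d=1$ with $\alpha(\eta,1)\equiv1$ and all other rates zero; then $\mathcal{R}(\gamma)=\{\gamma\}$ is a single point, so $s(1+\epsilon)\mathcal{R}(\gamma)=\{s(1+\epsilon)\gamma\}$ and the event in the lemma has probability zero for every~$t$. More generally, whenever all drift vectors $\sum_z z\,\alpha(\eta,z)$ lie in a common open half-space one has $0\notin\mathcal{R}(\gamma)$, hence $\mathcal{R}(\gamma)\not\subset(1+\epsilon)\mathcal{R}(\gamma)$, and your convexity step collapses. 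What you \emph{do} prove, $\dist(X_s/s,\mathcal{R}(\gamma))\to0$, is precisely what Proposition~\ref{prop coupling} actually needs (there one assumes $\mathcal{R}(\gamma)$ sits in the interior of the shape $D$, so an $\epsilon$-neighbourhood of $\mathcal{R}(\gamma)$ is still inside $D$). So the defect is arguably in the formulation of the lemma or of~\eqref{eq range} rather than in your strategy; but as a proof of the statement exactly as written, the passage from $\dist(X_s/s,\mathcal{R}(\gamma))\to0$ to $X_s\in s(1+\epsilon)\mathcal{R}(\gamma)$ does not go through without the unproven interior assumption.
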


With the help of  the coupling construction in the previous section, together with Lemma \ref{lem CE} and Lemma \ref{lem coupling help}, 
we next present a generalisation of  Proposition 3.3 in \cite{HollanderSantosRWCP2013} which allows us to compare possible limiting measures of $(\eta_t^{EP})$.

\begin{prop}\label{prop coupling}
Assume that $(X_t)$ satisfies Assumptions \eqref{assump 1} and \eqref{assump 2} and is elliptic. 
Furthermore, assume there exists $\mu^{EP} \in \mathcal{M}_1(\Omega)$ making $\bP_{\mu^{EP},\lambda}^{EP}$ stationary and ergodic with respect to time-shifts and such that $\mu^{EP}\neq \delta_{\bar{0}}$. 
If, for some $\epsilon>0$, $\mathcal{R}(\gamma)(1+\epsilon) \subset D$, then for every $f \in \cC(\Omega;\reals)$;
\begin{align}
\bP_{\omega,\lambda}^{EP} \left(\lim_{t\rightarrow \infty} t^{-1} \int_0^{t} f(\eta_s^{EP}) ds = \mu^{EP}(f) \mid \tau^{\omega}=\infty \right)=1.
\end{align}
\end{prop}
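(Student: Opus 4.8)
The plan is to transfer the convergence of Cesàro time-averages from the stationary ergodic measure $\mu^{EP}$ to the deterministic initial configuration $\omega$ by means of the coupling of Lemma \ref{lem coupling}. First, since $\bP_{\mu^{EP},\lambda}^{EP}$ is stationary and ergodic with respect to time-shifts, the continuous-time ergodic theorem yields $t^{-1}\int_0^t f(\eta_s^{EP})\,ds \to \mu^{EP}(f)$, $\bP_{\mu^{EP},\lambda}^{EP}$-almost surely, for every bounded $f$. I also record that the $\mu^{EP}$-started process survives almost surely: because $\{\bar{0}\}$ is absorbing and invariant, ergodicity together with $\mu^{EP}\neq\delta_{\bar{0}}$ force $\mu^{EP}(\{\bar{0}\})=0$, and stationarity then forces $\bP_{\eta,\lambda}^{EP}(\tau<\infty)=0$ for $\mu^{EP}$-a.e.\ $\eta$.

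I would then fix $\omega$, draw a second initial configuration $\eta$ from $\mu^{EP}$, run the coupling $\hP_{\eta,\omega,T}^{\lambda}$ of Lemma \ref{lem coupling}, and afterwards average over $\eta\sim\mu^{EP}$; by the marginal property (a), the second coordinate $(\eta_t^2,X_t^2)$ is then distributed as the $\mu^{EP}$-started joint process, so its CPSRW coordinate $\eta_t^{2,EP}$ obeys the ergodic theorem above. The goal is to show that the two processes $\eta_t^{1,EP}$ and $\eta_t^{2,EP}$ eventually coincide on every fixed finite box, for then, since $f$ is continuous and hence arbitrarily well approximated by local functions, $f(\eta_s^{1,EP})$ and $f(\eta_s^{2,EP})$ agree for all large $s$ and the two Cesàro averages share the limit $\mu^{EP}(f)$. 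Conditioning on $\tau^{\omega}=\infty$ (the second copy surviving a.s.), I would let $T\to\infty$ and use: Lemma \ref{lem CE} together with the hypothesis $\mathcal{R}(\gamma)(1+\epsilon)\subset D$ — comparing both processes to the one started from $\bar{1}$ through the monotone graphical coupling — to make the event $C_{T,R,\epsilon}(\eta,\omega)$ that the two contact processes agree throughout the cone $s(1+\epsilon)\mathcal{R}(\gamma)+[-R,R]^d$ have conditional probability tending to $1$; and Lemma \ref{lem coupling help} to make the confinement event $D_{T,\epsilon}$ have probability tending to $1$. On $C_{T,R,\epsilon}\cap D_{T,\epsilon}$ both walkers sit inside the region where the environments are identical, so once additionally $X_T^1=X_T^2$, part (c) of Lemma \ref{lem coupling} gives $X_t^1=X_t^2$ for all $t\ge T$ (with conditional probability $\to1$), whence $\eta_t^{1,EP}=\eta_t^{2,EP}$ on any fixed box for all large $t$.

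The one ingredient not supplied by the preceding lemmas, and the main obstacle, is producing the meeting event $\{X_T^1=X_T^2\}$: part (c) of Lemma \ref{lem coupling} only keeps the walkers together once they have met. This is precisely where the ellipticity hypothesis \eqref{eq weakly elliptic} enters. Since before time $T$ the two walkers use independent Poisson clocks, and ellipticity provides the jumps $e_1,\dots,e_n$ at a rate bounded below uniformly in the environment, I would steer the walkers toward a common site using these jumps, and — a single attempt to close the gap while both walkers remain confined to the thin cone $t(1+\epsilon)\mathcal{R}(\gamma)$ succeeding with conditional probability bounded below — iterate the attempt over a sequence of disjoint time windows and invoke a conditional Borel–Cantelli argument to conclude that the walkers meet almost surely on the survival event. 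Establishing a uniform-in-$T$ lower bound for the per-window meeting probability, given that the confinement region grows linearly in time, is the delicate technical point; this step is the generalisation of Proposition 3.3 of \cite{HollanderSantosRWCP2013} from one-dimensional nearest-neighbour walks to general dimension and general elliptic kernels.

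Finally, I would assemble the pieces: for each $\epsilon>0$, the intersection of $C_{T,R,\epsilon}$, $D_{T,\epsilon}$, the meeting event, and the stay-together event of Lemma \ref{lem coupling}(c) has conditional probability, given $\tau^{\omega}=\infty$, tending to $1$ as $T\to\infty$, and on this intersection the equality of Cesàro limits holds with the limit identified as $\mu^{EP}(f)$. Since the statement $\lim_{t}t^{-1}\int_0^t f(\eta_s^{EP})\,ds=\mu^{EP}(f)$ is insensitive to finite-time modifications and fails only on a conditionally null set, letting $T\to\infty$ yields $\bP_{\omega,\lambda}^{EP}(\lim_{t}t^{-1}\int_0^t f(\eta_s^{EP})\,ds=\mu^{EP}(f)\mid \tau^{\omega}=\infty)=1$, which is the claim.
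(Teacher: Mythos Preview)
Your overall architecture matches the paper's: fix a good initial configuration $\eta$ for which the ergodic theorem holds (the paper picks a single $\eta$ from a full-measure set rather than averaging over $\mu^{EP}$, but this is cosmetic), run the coupling of Lemma~\ref{lem coupling}, and use Lemmas~\ref{lem CE} and~\ref{lem coupling help} together with the hypothesis $\mathcal{R}(\gamma)(1+\epsilon)\subset D$ to guarantee that on $\Gamma_T := C_{T,M,\epsilon}\cap D_{T,\epsilon}$ the two environments agree in the relevant cone and both walkers stay inside it.

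Where you diverge is in producing the meeting $X_T^1=X_T^2$, which you correctly identify as the crux. Your proposed route --- a Borel--Cantelli iteration over disjoint time windows --- is unnecessarily hard and in fact awkward to implement with the coupling as given: in $\hP_{\eta,\omega,T}^{\lambda}$ the parameter $T$ is fixed once, the walkers use independent clocks only on $[0,T]$, and shared clocks thereafter, so there is no natural sequence of independent attempts within one coupling. The paper sidesteps the whole issue with a one-line independence trick. Condition on $\{X_T^2=x\}$ for the (random) site $x$ reached by the second walker. By ellipticity there is an event $B_x$, measurable with respect to $(N^1_{[0,T]},U^1)$ alone, of positive probability, on which $X_T^1=x$ \emph{regardless of the environment}; crucially, by construction $(N^1_{[0,T]},U^1)$ is independent of $(\eta^1,\eta^2)$ and of $(X_t^2)_{t\le T}$. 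Hence one may insert $B_x$ into the conditioning for free:
\[
\hP_{\eta,\omega,T}^{\lambda}\bigl(\,\cdot\mid \Gamma_T,\tau^{\omega}=\infty,X_T^2=x\bigr)
=\hP_{\eta,\omega,T}^{\lambda}\bigl(\,\cdot\mid \Gamma_T,\tau^{\omega}=\infty,X_T^2=x,B_x\bigr),
\]
and on the right-hand conditioning set one has $X_T^1=X_T^2=x$, so part~(c) of Lemma~\ref{lem coupling} glues the walkers and the two CPSRW trajectories coincide locally for all large times. The Ces\`aro limit for $(\theta_{X_s^2}\eta_s^2)$ then equals that for $(\theta_{X_s^1}\eta_s^1)$, and the latter is $\mu^{EP}(f)$ almost surely since $\eta$ was chosen in the ergodic-good set and conditioning on an event of positive probability preserves almost-sure statements. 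No iteration, no uniform lower bounds on meeting probabilities, no Borel--Cantelli.
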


\begin{proof}
Let $\mu^{EP} \in \mathcal{M}_1(\Omega)$ be such that $\bP_{\mu^{EP},\lambda}^{EP}$ is ergodic with respect to time-shifts and $\mu^{EP}\neq \delta_{\bar{0}}$. By ergodicity, we know that there exist a set $B \in \mathcal{F}$ of full $\mu^{EP}$-measure such that for any $f \in \cC(\Omega;\reals)$ and $\eta \in B$; 
\begin{align}\label{eq ergodic}
\bP_{\eta,\lambda}^{EP} \left(\lim_{t\rightarrow \infty} t^{-1} \int_0^{t} f(\eta_s^{EP}) ds = \mu^{EP}(f) \right)=1.
\end{align}
Furthermore, since $\mu^{EP} \neq \delta_{\bar{0}}$ both are ergodic they are necessarily singular. 
Thus, $\mu^{EP}$ assigns $0$ probability to the class of configurations with only finitely many $1$'s. To see this, note that a configuration of finitely many $1$'s may die out in finite time with positive probability and,  since $\mu^{EP}$ and $\delta_{\bar{0}}$ are singular, this would lead to a contradiction. 
Consequently, by Remark \ref{rem sunday}, the ordinary contact process started from $\mu^{EP}$ survives almost surely.

Fix $\eta \in \Omega$ such that \eqref{eq ergodic} is satisfied. Let $\Lambda \subset [-n,n]^d$ for some $n \in \nat$ and consider a function $f \in \cC(\Omega;\reals)$ only depending on the configuration inside $\Lambda$. 
In order to prove Proposition \ref{prop coupling} we will show that for any $\omega \neq \bar{0}$;
\begin{align}\label{eq prop claim}
\bP_{\omega,\lambda}^{EP} \left(\lim_{t\rightarrow \infty} t^{-1} \int_0^{t} f(\eta_s^{EP}) ds = \mu^{EP}(f) \mid \tau=\infty \right)=1 \end{align} irrespectively of the choice of $\Lambda$ and $f$. This readily implies the statement of Proposition \ref{prop coupling} by standard arguments since $\cC(\Omega;\reals)$ is generated by the set of all local functions.

Now, to prove \eqref{eq prop claim} and complete the proof, consider the coupling $\hP_{\eta,\omega,T}^{\lambda}$ as constructed in Lemma \ref{lem coupling}. Let $M= \max(n,R)$, recall the definitions  \eqref{eq CTepsilon} and \eqref{eq DTepsilon},  and let  
\begin{align}
\Gamma_T := \{D_{T,\epsilon} \cap C_{T,M,\epsilon}(\eta,\omega) \text{ for some } \epsilon >0 \}.
\end{align}
Note that, by Lemma \ref{lem CE} and Lemma \ref{lem coupling help}, it holds that, for all $T>0$,
\begin{align}
\lim_{t \rightarrow \infty} \hP_{\eta,\omega,T}^{\lambda}(\Gamma_t\mid \tau^{\omega}=\infty)=1,\end{align} since by assumption $\mathcal{R}(\gamma)(1+\epsilon) \subset D$ for some $\epsilon>0$.
By the law of total expectation, by taking conditional expectation, \eqref{eq prop claim} thus follows if we can show that 
\begin{align}
\hP_{\eta,\omega,T}^{\lambda} \left(\lim_{t\rightarrow \infty} t^{-1} \int_0^{t} f(\theta_{X_s}\eta_s^2) ds = \mu^{EP}(f) \mid \Gamma_T,  \tau^{\omega}=\infty \right)=1, \quad \forall \: T>0.
\end{align}
Moreover, let $\epsilon>0$ be such that $\Gamma_T$ holds. Consequently, on $\Gamma_T$, $X_T^2=x$ for some $x \in \mathcal{R}(\gamma)(1+\epsilon)T$. 
Thus, by property c) of the coupling construction in Lemma \ref{lem coupling}, it suffices to show that
\begin{align}\label{eq prop conditioning}
\hP_{\eta,\omega,T}^{\lambda} \left(\lim_{t\rightarrow \infty} t^{-1} \int_0^{t} f(\theta_{X_s}\eta_s^2) ds = \mu^{EP}(f) \mid \Gamma_T,  \tau^{\omega}=\infty, X_T^2=x \right)=1, \quad \quad
\end{align} for all $T>0$ and $x \in \mathcal{R}(\gamma)(1+\epsilon)T$.

To this end, we employ the ellipticity assumption. For each $x\in \Latd$ fixed, there exists an event $B_x$ generated by $(N_{[0,T]}^1,U_{[1,N_{[0,T]}]})$ which has positive probability and such that $X_T^1 = x $ on $B_x$. By property c) of the coupling construction and due to the ellipticity assumption, $B_x$ can be chosen independent of the evolution of $(\eta_t^1,\eta_t^2)$ and $(X_t^2)$. 
Using this property, we thus have that
\begin{align}
&\hP_{\eta,\omega,T}^{\lambda} \left(\lim_{t\rightarrow \infty} t^{-1} \int_0^{t} f(\theta_{X_s^2}\eta_s^2) ds = \mu^{EP}(f) \mid \Gamma_T,  \tau^{\omega}=\infty, X_T^2=x \right) 
\\ =&\hP_{\eta,\omega,T}^{\lambda} \left(\lim_{t\rightarrow \infty} t^{-1} \int_0^{t} f(\theta_{X_s^2}\eta_s^2) ds = \mu^{EP}(f) \mid \Gamma_T,  \tau^{\omega}=\infty, X_T^2=x, B_x \right) 
\\ =&\label{eq prop conclude}\hP_{\eta,\omega,T}^{\lambda} \left(\lim_{t\rightarrow \infty} t^{-1} \int_0^{t} f(\theta_{X_s^1}\eta_s^1) ds = \mu^{EP}(f) \mid \Gamma_T,  \tau^{\omega}=\infty, X_T^2=x \right) \quad \quad \quad
\end{align} 
Here, the first equality holds since $B_{x}$ is independent of all the other variables. To see that the second equality holds, note that $\Gamma_T$ ensures that the contact processes are perfectly coupled inside the space-time region defined by $C_{T,M,\epsilon}(\eta,\omega)$. Furthermore, since $x\in \mathcal{R}(\gamma)(1+\epsilon)T$  and $X_T^1=X_T^2$ property c) of  the coupling construction apply.
To conclude \eqref{eq prop claim} and hence the proof of Proposition \ref{prop coupling}, we note that \eqref{eq prop conclude} equals $1$ as a consequence of \eqref{eq ergodic}. \end{proof}

By a straightforward adaptation of the proof of Proposition \ref{prop coupling}, replacing $\mu^{EP}$ by $\delta_{\bar{0}}$ and using Lemma \ref{lem CE dies out} instead of Lemma \ref{lem CE}, we have the following statement.

\begin{prop}\label{prop coupling3}
Assume that $(X_t)$ satisfies Assumptions \eqref{assump 1} and \eqref{assump 2} and is elliptic. 
Let $\omega \in \Omega$ be such that $\sum_{x\in \Latd}\omega(x)<\infty$. Then, for every $f \in \cC(\Omega;\reals)$;
\begin{align}
\bP_{\omega,\lambda}^{EP} \left(\lim_{t\rightarrow \infty} t^{-1} \int_0^{t} f(\eta_s^{EP}) ds = f(\bar{0}) \mid \tau^{\omega}<\infty \right)=1.
\end{align}
\end{prop}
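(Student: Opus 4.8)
The plan is to mirror the structure of the proof of Proposition \ref{prop coupling}, substituting the role of the ergodic measure $\mu^{EP}$ with the absorbing state $\delta_{\bar{0}}$, and replacing the coupling-and-equilibration input (Lemma \ref{lem CE}) with its ``dying out'' counterpart (Lemma \ref{lem CE dies out}). Since $\omega$ has only finitely many infections, conditioning on $\{\tau^{\omega}<\infty\}$ means the contact process $(\eta_t^{\eta})$ started from $\omega$ is eventually identically $\bar{0}$ on large space-time cones. First I would reduce to local functions: it suffices to prove the claim for $f \in \cC(\Omega;\reals)$ depending only on the configuration inside some box $\Lambda \subset [-n,n]^d$, since such functions generate $\cC(\Omega;\reals)$ and the general statement follows by standard approximation. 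Because the limiting target is the trivial measure $\delta_{\bar{0}}$, the target value is simply $f(\bar{0})$.

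Next I would set up the coupling. I would couple $(\eta_t^{\omega},X_t)$ with a reference contact process started from $\bar{0}$ using the coupling $\hP_{\omega,\bar{0},T}^{\lambda}$ from Lemma \ref{lem coupling}; note the second coordinate started from $\bar{0}$ stays at $\bar{0}$ for all time, so $\theta_{X_s^2}\eta_s^2 \equiv \bar{0}$ and hence $f(\theta_{X_s^2}\eta_s^2) = f(\bar{0})$ identically. With $M=\max(n,R)$ and $\Gamma_T := \{D_{T,\epsilon}\cap C_{T,M,\epsilon}(\omega,\bar{0}) \text{ for some }\epsilon>0\}$, Lemma \ref{lem CE dies out} together with Lemma \ref{lem coupling help} gives, under the assumption $\mathcal{R}(\gamma)(1+\epsilon)\subset D$, that
\begin{align}
\lim_{t\rightarrow\infty}\hP_{\omega,\bar{0},T}^{\lambda}(\Gamma_t \mid \tau^{\omega}<\infty)=1 \quad \forall\: T>0.
\end{align}
On $\Gamma_T$, the walk $X_s^2$ stays inside $s(1+\epsilon)\mathcal{R}(\gamma)\subset sD$, which is exactly the region where Lemma \ref{lem CE dies out} forces $\eta_s^{\omega}$ to agree with $\bar{0}$; thus on $\Gamma_T$ the environment $\theta_{X_s^2}\eta_s^2$ seen around the walk is eventually trivial inside $\Lambda$, so the Ces\`aro average of $f(\theta_{X_s^2}\eta_s^2)$ converges to $f(\bar{0})$ for all large $s$. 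By the law of total expectation, conditioning on $\Gamma_T$ and using property c) of the coupling to transfer from $X^2$ to the actual walk $X$ (exactly as in the ellipticity step of Proposition \ref{prop coupling}, via the positive-probability event $B_x$ driving $X_T^1=X_T^2=x$), the claim reduces to the statement that on $\Gamma_T$,
\begin{align}
\lim_{t\rightarrow\infty} t^{-1}\int_0^t f(\theta_{X_s}\eta_s^2)\,ds = f(\bar{0}),
\end{align}
which holds trivially since the integrand is eventually constant equal to $f(\bar{0})$.

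The main difference from Proposition \ref{prop coupling}, and where one must be slightly careful rather than merely invoking symmetry, is that here the walk's trajectory must be confined to the region where the finite-support contact process has already died out, conditional on $\{\tau^{\omega}<\infty\}$. This is precisely the content of Lemma \ref{lem CE dies out}, whose conditioning event $\{\tau^{\eta}<\infty\}$ matches the conditioning in the proposition; the exponential decay estimate in that lemma is what guarantees $\hP(\Gamma_t\mid\tau^{\omega}<\infty)\to1$. The ellipticity argument transferring the conclusion from the auxiliary walk $X^1$ (or equivalently the coupled walk $X^2$) to the true environment process is identical to that in Proposition \ref{prop coupling} and requires no new ideas, so I would cite it rather than reproduce it. I expect no genuine obstacle beyond bookkeeping: the target being the constant $f(\bar{0})$ makes the ergodic-average step degenerate, so the entire difficulty is already absorbed into Lemma \ref{lem CE dies out} and the coupling of Lemma \ref{lem coupling}.
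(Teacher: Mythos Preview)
Your proposal is correct and follows exactly the route the paper indicates: adapt the proof of Proposition~\ref{prop coupling} with $\delta_{\bar 0}$ replacing $\mu^{EP}$ and Lemma~\ref{lem CE dies out} replacing Lemma~\ref{lem CE}. Note only a small bookkeeping slip---with the ordering $\hP_{\omega,\bar 0,T}^{\lambda}$ the process whose Ces\`aro average you must control is $(\eta^1,X^1)$, not $(\eta^2,X^2)$---and that in fact the statement admits an even shorter argument: on $\{\tau^\omega<\infty\}$ the contact process (hence also $\eta^{EP}$) is absorbed at $\bar 0$ at the finite time $\tau^\omega$, so $f(\eta_s^{EP})=f(\bar 0)$ for all $s\ge\tau^\omega$ and the time average converges trivially, without invoking the coupling, ellipticity, or the cone condition $\mathcal{R}(\gamma)(1+\epsilon)\subset D$.
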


Following \cite[Remark 3.4 ]{HollanderSantosRWCP2013}, the ellipticity assumption in the above argument is not necessary in the case  when the contact process is started from the upper invariant measure. The following proposition is essential for the proof of Theorem \ref{thm CE3}.

\begin{prop}\label{prop coupling2}
Assume that $(X_t)$ satisfies Assumptions \eqref{assump 1} and \eqref{assump 2}. 
Furthermore, assume there exists $\mu^{EP} \in \mathcal{M}_1(\Omega)$ making $\bP_{\mu^{EP},\lambda}^{EP}$ ergodic with respect to time-shifts and such that $\mu^{EP}\neq \delta_{\bar{0}}$. 
If, for some $\epsilon>0$, $\mathcal{R}(\gamma)(1+\epsilon) \subset D$, then for every $f \in \cC(\Omega;\reals)$;
\begin{align}
\bP_{\bar{\nu}_{\lambda},\lambda}^{EP} \left(\lim_{t\rightarrow \infty} t^{-1} \int_0^{t} f(\eta_s^{EP}) ds = \mu^{EP}(f) \mid \tau=\infty \right)=1.
\end{align}
\end{prop}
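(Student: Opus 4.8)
The plan is to follow the proof of Proposition \ref{prop coupling} essentially verbatim, the single difference being that the step where ellipticity was used there---namely, the construction of the positive-probability event $B_x$ steering the walk $X^1$ onto the position of $X^2$ at time $T$---is replaced by an appeal to the translation invariance of $\bar{\nu}_{\lambda}$. As in that proof, it suffices to treat a local function $f$ depending only on the configuration inside a window $\Lambda \subset [-n,n]^d$, and to fix a configuration $\eta$ in the set $B$ of full $\mu^{EP}$-measure on which the Cesàro averages of $f(\eta_s^{EP})$ converge to $\mu^{EP}(f)$ under $\bP_{\eta,\lambda}^{EP}$ (the set $B$ is produced exactly as in the proof of Proposition \ref{prop coupling}). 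Since $\bar{\nu}_{\lambda}$ is translation invariant and non-trivial, it is supported on configurations with infinitely many infections, so by Remark \ref{rem sunday} the event $\tau=\infty$ has full $\bP_{\bar{\nu}_{\lambda},\lambda}^{EP}$-probability and the conditioning is vacuous.

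First I would invoke Lemma \ref{lem coupling} to couple the joint process $(\eta_t^1,X_t^1)$ started from $(\eta,o)$ with $(\eta_t^2,X_t^2)$ started from $(\bar{\nu}_{\lambda},o)$, the two walks being driven by independent clocks up to time $T$ and by common clocks thereafter. Working on the good event that the walks stay inside the space-time cone (Lemma \ref{lem coupling help}) and the contact processes are perfectly coupled inside it, I record the bounded displacement $c := X_T^1 - X_T^2$, which is measurable with respect to the information up to time $T$. The crucial modification is that I now couple the two contact processes \emph{from time $T$ onwards} so that $\eta_t^1$ agrees with $\theta_c \eta_t^2$ (rather than with $\eta_t^2$) inside the forward cone $C_{T,M,\epsilon}$, where $M=\max(n,R)$. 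This is legitimate precisely because $\theta_c \bar{\nu}_{\lambda}=\bar{\nu}_{\lambda}$ and the dynamics are translation invariant: specifying the joint forward evolution of $(\eta_t^1,\theta_c\eta_t^2)_{t\geq T}$ through the graphical representation determines a valid joint forward evolution of $(\eta_t^1,\eta_t^2)_{t\geq T}$, under which $\eta^2$ still evolves as a contact process from $\eta^2_T$, so the marginal of Lemma \ref{lem coupling}(a) for the second walker is preserved. Since both $\eta$ and $\bar{\nu}_{\lambda}$-almost every configuration survive, both contact processes couple with the all-ones process inside the forward cone as in Lemma \ref{lem CE}, and hence $\eta_t^1=\theta_c\eta_t^2$ there; because after time $T$ the walks use common clocks and, inside the cone, their shifted local environments coincide, the argument of property (c) of Lemma \ref{lem coupling} yields $X_t^1 = X_t^2 + c$ for all $t\geq T$ with conditional probability tending to $1$ as $T\to\infty$.

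Combining the two facts $\eta_t^1 = \theta_c \eta_t^2$ (inside the cone) and $X_t^1 = X_t^2 + c$, a direct computation gives $\theta_{X_t^1}\eta_t^1(y) = \eta_t^1(X_t^1+y) = \eta_t^2(X_t^1 + y - c) = \eta_t^2(X_t^2+y) = \theta_{X_t^2}\eta_t^2(y)$ for every $y\in\Lambda$ and all sufficiently large $t$ on the good event; as $f$ depends only on $\Lambda$, the two environment processes then have identical Cesàro limits. Since the first limit equals $\mu^{EP}(f)$ by the choice of $\eta\in B$, so does the second. Sending $T\to\infty$ to make the conditional probability of the good event tend to $1$, and then passing from local $f$ to general $f\in\cC(\Omega;\reals)$ by the usual density argument, completes the proof. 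The main obstacle is the construction of this coupling with the \emph{random} shift $c$ while retaining the correct $\bar{\nu}_{\lambda}$-marginal for the second walker; translation invariance of $\bar{\nu}_{\lambda}$ is exactly the ingredient guaranteeing that shifting the environment by $c$ to align the walks does not alter its law, which is why the ellipticity assumption can be dropped here.
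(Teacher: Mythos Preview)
Your approach differs from the paper's and, as written, contains a genuine gap. The paper does not use spatial translation invariance of $\bar\nu_\lambda$; it uses \emph{time}-stationarity. Concretely, it replaces the conditioning on $\{X_T^2=x\}$ in the proof of Proposition~\ref{prop coupling} by $\{N_T^2=0\}$ (so $X_T^2=0$), notes that by invariance of $\bar\nu_\lambda$ under the contact dynamics and independence of $N^2$ from $\eta^2$, the law of $(X^2_{T+\cdot},\eta^2_{T+\cdot})$ given $N_T^2=0$ equals the unconditional law from time $0$, and then replaces the ellipticity event $B_x$ by $\{N_T^1=0\}$, which has positive probability and forces $X_T^1=0$. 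In this way both walkers are pinned at the origin at time $T$ and the \emph{original} graphical coupling from time $0$ (hence the event $C_{T,M,\epsilon}$ from Lemma~\ref{lem CE}) can be used unchanged.

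The difficulty with your modification is the claim that, after re-coupling at time $T$ with the random shift $c=X_T^1-X_T^2$, one has $\eta_t^1=\theta_c\eta_t^2$ inside the walker cone $C_{T,M,\epsilon}$ \emph{for all} $t\geq T$. If at time $T$ you start driving $\eta^1$ and $\theta_c\eta^2$ with the same graphical marks, Lemma~\ref{lem CE} applied from time $T$ only yields agreement inside the cone $(t-T)(1-\epsilon')D$ for $t\geq T+T'$, centred at the origin of the \emph{new} time frame. At time $T$ this cone degenerates to a point, whereas the walkers sit at $X_T^1$ and $X_T^2$, typically at distance of order $T$ from the origin. Thus immediately after time $T$ the local environments $\theta_{X_t^1}\eta_t^1$ and $\theta_{X_t^2}\eta_t^2$ need \emph{not} agree on $[-R,R]^d$; the walks, though sharing clocks, may make different jumps, the offset ceases to be $c$, and the identity $\theta_{X_t^1}\eta_t^1=\theta_{X_t^2}\eta_t^2$ cannot be recovered without an ellipticity-type steering. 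In short, your argument is circular: it uses $X_t^1=X_t^2+c$ to deduce that the environments coincide, and the coincidence of environments to deduce $X_t^1=X_t^2+c$. (Note also that the marginal preservation of your re-coupling relies on translation invariance of the graphical representation, not of $\bar\nu_\lambda$; the identity $\theta_c\bar\nu_\lambda=\bar\nu_\lambda$ plays no role there.) The paper's device of forcing $X_T^1=X_T^2=0$ via $\{N_T^1=0\}\cap\{N_T^2=0\}$ sidesteps exactly this issue, because then no shift is needed and the already-established event $C_{T,M,\epsilon}$ from time $0$ applies.
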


\begin{proof}
The statement follows as in the proof of Proposition \ref{prop coupling}, only with minor modifications which we highlight next. To adapt the proof, replace the conditioning on $X_T^2=x$ in \eqref{eq prop conditioning} (and the proceeding derivations) by the event $\{ N_T^2=0\}$, which implies $X_T^2=0$. Then, by stationarity of the contact process under $\bar{\nu}_{\lambda}$, $(X_{t+T}^2-X_T^2)_{t\geq0}$ under $\hP_{\eta,\bar{\nu}_{\lambda},T}^{\lambda}(\cdot \mid N_T^2=0)$ has the same distribution as $(X_t^2)_{t \geq 0}$ under $\hP_{\eta,\bar{\nu}_{\lambda},T}^{\lambda}(\cdot)$. Since $N_T^1=0$ implies $X_T^1=0$ and has positive probability, the claim follows as in the proof of Proposition \ref{prop coupling}  by replacing $B_x$ by $\{N_T^1=0\}$.
\end{proof}

\subsection{Proof of Theorems \ref{thm CE} and \ref{thm CE3} }

\begin{proof}[Proof of Theorem \ref{thm CE}]
As mentioned in the introduction, the measure $\delta_{\bar{0}}$ is trivially an invariant measure for the CPSRW process. Furthermore, it clearly makes $\bP_{\delta_{\bar{0}}}^{EP}$ ergodic with respect to time-shifts and is hence extremal.
Thus, in the (unlikely) scenario that $\delta_{\bar{0}}$ is the unique invariant measure for the CPSRW process, Theorem \ref{thm CE} follows by classical ergodic theory with $\bar{\nu}_{\lambda}^{EP}=\delta_{\bar{0}}$.

To complete the argument of Theorem \ref{thm CE}, we next consider the (more likely) scenario  that there exist a measure $\mu^{EP} \in \mathcal{M}_1(\Omega)$ invariant under $(\eta_t^{EP})$ and such that $\mu^{EP} \neq \delta_{\bar{0}}$. Without loss of generality, assume that $\mu^{EP}$ is extremal and hence singular with respect to $\delta_{\bar{0}}$. By Proposition \ref{prop coupling} together with Remark \ref{rem sunday}, the statement of Theorem \ref{thm CE} follows immediately when starting the CPSRW process with a configuration having infinitely many $1$'s in the case that $\mathcal{R}(\gamma)(1+\epsilon) \subset D$. In this case the CPSRW process  convergence towards $\bar{\nu}_{\lambda}^{EP}$.  For fixed $\lambda>\lambda_c$, the statement of Theorem \ref{thm CE}a) thus follows by taking $\gamma$ sufficiently small. Similarly, for fixed $\gamma\in (0,\infty)$, the statement of Theorem \ref{thm CE}b) follows by taking $\lambda$ sufficiently large, since $D=D(\lambda)$ is growing towards the whole lattice as $\lambda$ increases.

Similarly, if the starting configuration $\eta\neq \bar{0}$ has only finitely many $1$'s, the CPSRW process converges towards $\bar{\nu}_{\lambda}^{EP}$ on the event that $\{\tau=\infty\}$. This follows again  by applying Proposition \ref{prop coupling}. On the other hand, on the event $\{\tau<\infty\}$, by Proposition \ref{prop coupling3}, the CPSRW process converges towards $\delta_{\bar{0}}$. This concludes the proof.
\end{proof}

\begin{proof}[Proof of Theorem \ref{thm CE3}]
This follows analogous to the proof of Theorem \ref{thm CE} for the case when $\eta \in \Omega$ has infinitely many $1$'s, by applying Proposition \ref{prop coupling2} instead of Proposition \ref{prop coupling}.
\end{proof}

\subsection{Proof of Theorem \ref{cor 2}}

\begin{proof}[Proof of Theorem \ref{cor 2}]
For the first part of the proof, we follow the proof of \cite{RedigVolleringLTRWDRE2011}, Theorem 4.1. 
Consider $(\eta_t^{EP})$ started from $\bar{\nu}_{\lambda}^{EP}$. Let $F_z : D([0,1],\Omega)\mapsto \bR$, $z\in\bZ^d$ count the number of shifts of size $z$ a piece of trajectory performs in the interval $[0,1]$, i.e.
\begin{align} F_z(\eta^{EP}_{[t,t+1]}) = \suml_{s\in]0,1]} \ind_{\{ \theta_{z}\eta^{EP}_{t+s}=\eta^{EP}_{t+s-} \} }. \end{align}
With $F := \sum_{z\in\bZ^d}zF_z$, which is well-defined and in $L^1(\mu^{EP})$ because of the rate condition $\norm{\alpha}_1<\infty$, we then have, for any integer $T>0$,
\[ X_T - X_0 = \sum_{n=1}^T F(\eta^{EP}_{[n-1,n]}). \]
The ergodic theorem then implies
\begin{align}\label{eq LLN convergence}
 \lim_{T\to\infty} \frac{X_T - X_0}{T} = \lim_{T\to\infty}\frac{1}{T}\sum_{n=1}^T F(\eta^{EP}_{[n-1,n]}) = \bar{\nu}_{\lambda}^{EP}(F). \end{align}
The same is true for non-integer $T$, by using the fact that $X_T-X_{\lfloor T \rfloor}$ has bounded expectation.
Since 
\begin{align*}
 \bar{\nu}_{\lambda}^{EP}(F) &= \int \int_0^1 \bE_{\eta,\lambda} \suml_{z\in\bZ^d} z\alpha(\theta_{X_t}\eta_t,z)\,dt\,\mu^{EP}(d\eta)\\
 & = \int \suml_{z\in\bZ^d} z\alpha(\eta,z)\,\mu^{EP}(d\eta),
\end{align*}
the claim is proven for $\bar{\nu}_{\lambda}^{EP}$.

To extend the result to an arbitrary probability measure $\nu$ we use Theorem \ref{thm CE}. Firstly, if $\bar{\nu}_{\lambda}^{EP} = \delta_{\bar{0}}$, then it follows that $(\eta_t^{EP})$ converges towards $\delta_{\bar{0}}$ when started from any $\eta_0 \in \Omega$. Consequently, the left hand side of \eqref{eq LLN convergence} converges towards $F(\bar{0})$ irrespectively of $\eta_0 \in \Omega$. 

Secondly,  if $\bar{\nu}_{\lambda}^{EP} \neq \delta_{\bar{0}}$, we concluded in the proof of Theorem \ref{thm CE} that $\bar{\nu}_{\lambda}^{EP}$ concentrates on configurations which have infinitely many infections. For any such configuration we showed in Lemma \ref{lem CE} that \eqref{eq coupling condition} holds. Thus, for any $\eta \in \Omega$, the left hand side of \eqref{eq LLN convergence} converges towards $\bar{\nu}_{\lambda}^{EP}(F)$ when conditioned on $\tau^{\eta}=\infty$. Similarly, on $\tau^{\eta}<\infty$, \eqref{eq LLN convergence} converges towards $F(\bar{0})$, and this concludes the proof.
\end{proof}




\end{document}